\newtheorem{thm}{Theorem}
\newtheorem{lem}{Lemma}
\begin{document}

\begin{center}
\Large\textbf{Optimal Designs in Multiple Group Random Coefficient Regression Models} \\[11pt]
\normalsize
Maryna Prus\footnote{Maryna Prus: \href{mailto:maryna.prus@ovgu.de}{maryna.prus@ovgu.de}}\\[11pt]

\footnotesize
Otto-von-Guericke University Magdeburg, Institute for Mathematical Stochastics, 
\\
PF 4120, D-39016 Magdeburg, Germany\\
\normalsize
\end{center}

\begin{quote}
\textbf{Abstract:} The subject of this work is multiple group random coefficients regression models with several treatments and one control group. Such models are often used for studies with cluster randomized trials. We investigate \textit{A}-, \textit{D}- and \textit{E}-optimal designs for estimation and prediction of fixed and random treatment effects, respectively, and illustrate the obtained results by numerical examples.

\textbf{Keywords: Optimal design, treatment and control, random effects, cluster randomization, mixed models, estimation and prediction} 
\end{quote}

\section{Introduction}

Random coefficient regression (RCR) models were originally introduced in plant and animal breeding and used for selection purposes (see e.\,g. \cite{hen4}). 
The subject of this paper is the multiple group random coefficient models, in which observational units (individuals) are allocated in several treatment groups and one control group. In each treatment group some group-special kind of treatment is available, in the control group there is no treatment. Such models are typically used for cluster randomization or cluster randomized trials (see e.\,g. \cite{bla} or \cite{pat}). 

RCR models with known population parameters were considered in detail by \cite{gla}. 
\cite{ent} investigated optimal designs for \textit{estimation} of unknown population mean parameters in RCR models, where all individuals are observed under the same regime. Analytical results for designs, which are optimal for the \textit{prediction} of individual random parameters in hierarchical models with the same treatment for all individuals, have been presented in \cite{pru1}. A practical approach for computation of optimal approximate and exact designs was proposed by \cite{harm}.

The fixed effects version of the multiple group models considered in this paper may be recognized as the well known one way layout model. Classical one-way layout models have been well discussed in the literature. Results for the optimal designs can be found e.\,g. in \cite{bai}, \cite{rasch}, \cite{wie}, \cite{schw} or \cite{maju}.

Optimal designs for \textit{estimation} of fixed parameters in multiple group models with random coefficients were considered e.\,g. in \cite{fedo}, \cite{sch}, \cite{kun}, \cite{blu} and \cite{lem}. \cite{fedo} worked on optimal designs, which minimize a loss function in multicentre trials models. In \cite{sch} models with the same fixed number of observations in all groups were investigated. \cite{kun} proposed a design optimization method based on the generalized least squares estimation. \cite{blu} considered models with carryover effects. In \cite{lem} optimal designs were computed for the maximum likelihood estimation.

Optimal designs for \textit{prediction} of random effects in models, where the population mean parameters differ from group to group, were briefly discussed in \cite{pru3}. 
 
In this paper we investigate multiple group models with the same unknown population parameters across all groups. We present analytical results for \textit{A}-, \textit{D}- and \textit{E}-optimal designs (-optimal group sizes) based on the best linear unbiased \textit{estimation} or \textit{prediction} of fixed or random treatment effects, respectively.

The paper has the following structure: In Section~2 the model will be specified. In Section~3 the best linear unbiased \textit{estimation} for the population parameters (mean treatment effects) and a best linear unbiased \textit{prediction} for the random treatment effects of the observational units will be discussed. Section~4 provides analytical results for designs, which are optimal for \textit{estimation} or \textit{prediction}. The results will be illustrated by a numerical example, in which we compare the optimal group sizes in the model under investigation with optimal group sizes in the fixed effects model (one-way-layout). The paper will be concluded by a discussion of the obtained results and possible directions for the future research in Section~5.

\section{Model Specification}\label{k1}
We consider here a multiple group model with $J$ groups and $N$ individuals. In the first $J-1$ (treatment) groups individuals get group-special kinds of treatment: $1, \dots, J-1$. Each treatment group includes $n>0$ individuals. The last group (group $J$) is a control group (no treatment) with $m=N-(J-1)n$, $m>0$ individuals. 
The $k$-th observation at individual $i$ in the $j$-th treatment group is given by the following formula:
\begin{equation}\label{1}
{Y}_{jik}=\mu_{i}+\alpha_{ji} + \varepsilon_{jik},\quad k=1, \dots, K,\quad i=(j-1)n+1, \dots, jn,\quad j=1, \dots, J-1,
\end{equation}
while in the control group the $k$-th observation at the $i$-th individual is given by
\begin{equation}\label{2}
{Y}_{Jik}=\mu_{i} + \varepsilon_{Jik},\quad k=1, \dots, K,\quad i=(J-1)n+1, \dots, N,
\end{equation}
where $K$ is the number of observations per individual, which is assumed to be the same for all individuals across all groups, $\varepsilon_{jik}$ are observational errors with zero expected value and common variance $\sigma^2>0$.

In this work we optimize the numbers of individuals in treatment and control groups. Therefore the group allocation of individuals is not completely clear. For this reason we define the individual treatment effects $\alpha_{ji}$ for all individuals and all treatments, i.\,e. $i=1, \dots, N$ and $j=1, \dots, J-1$. For individual $i$ in the control group the treatment effects $\alpha_{ji}$ would appear if the individual would be treated with treatment $j$. For individual $i$ in treatment group $j$ the treatment effect $\alpha_{j'i}$ would appear if the individual would get treatment $j'$ instead of $j$ for $j, j'=1, \dots, J-1$, $j\neq j'$.

Individual intercepts $\mu_{i}$ are defined for all individual, i.\,e. $i=1, \dots, N$. $\mu_{i}$ and $\alpha_{ji}$ have unknown expected values $\mathrm{E}(\mu_{i})=\mu$, $\mathrm{E}(\alpha_{ji})=\alpha_j$ and variances $\mathrm{var}(\mu_{i})=u\,\sigma^2$, $\mathrm{var}(\alpha_{ji})=v\,\sigma^2$ for some given positive values of $u$ and $v$. All individual parameters $\mu_{i}$ and $\alpha_{ji'}$ and all observational errors $\varepsilon_{j'i''k}$, for $k=1, \dots, K$, $i,\,i'=1, \dots, N$ and $j, j', j''=1, \dots, J$, are uncorrelated.


For further considerations we define the regression functions
\begin{equation*}
\mathbf{f}(j):=\left\{\begin{array}{ll} (1,\mathbf{e}_j^\top)^\top \,, & j=1, \dots, J-1 \\ (1,\mathbf{0}_{J-1}^\top)^\top\,, & j=J \end{array}\right.,
\end{equation*}
where $\mathbf{e}_j$ and $\mathbf{0}_{J-1}$ denote the $j$-th unit vector and the zero vector of length $J-1$, respectively, and the total number of individuals
\begin{equation*}
N_j:=\left\{\begin{array}{ll} nj, & j=1, \dots, J-1 \\ N, & j=J \end{array}\right.
\end{equation*}
in the first $j$ groups, and we fix $N_0$ by $0$. Then for the vector $\mbox{\boldmath{$\theta$}}_i:= (\mu_{i}, \alpha_{1\,i}, \dots, \alpha_{J-1\,i})^\top$ of individual random parameters the multiple group model (defined by \eqref{1} and \eqref{2}) may be rewritten in the following form:
\begin{equation}\label{3}
{Y}_{jik}=\mathbf{f}(j)^\top\mbox{\boldmath{$\theta$}}_i + \varepsilon_{jik},\quad k=1, \dots, K,\quad i=N_{j-1}+1, \dots, N_j,\quad j=1, \dots, J.
\end{equation}
The parameter vectors $\mbox{\boldmath{$\theta$}}_i$ have the expected value $\mathrm{E}(\mbox{\boldmath{$\theta$}}_i)=\mbox{\boldmath{$\theta$}}_0:=(\mu, \alpha_{1}, \dots, \alpha_{J-1})^\top$ and the covariance matrix $\mathrm{Cov}(\mbox{\boldmath{$\theta$}}_i)=\sigma^2\textrm{block-diag}(u, v\,\mathbb{I}_{J-1})$, where $\mathbb{I}_p$ denotes the $p\times p$ identity matrix and $\textrm{block-diag}(\mathbf{A}_1, \dots, \mathbf{A}_p)$ is the block diagonal matrix with $l\times l$ blocks $\mathbf{A}_1, \dots, \mathbf{A}_p$.

On the individual level the vectors $\mathbf{Y}_{j,i}=({Y}_{ji1}, \dots, {Y}_{jiK})^\top$ for individuals in the $j$-th group can be specified as
\begin{equation}\label{5}
\mathbf{Y}_{j,i}=\mathds{1}_K\,\mathbf{f}(j)^\top\mbox{\boldmath{$\theta$}}_i + \mbox{\boldmath{$\varepsilon$}}_{j,i},\quad i=N_{j-1}+1, \dots, N_j,\quad j=1, \dots, J,
\end{equation}
where $\mathds{1}_K$ denotes the vector of length $K$ with all entries equal to $1$ and $\mbox{\boldmath{$\varepsilon$}}_{j,i}=(\varepsilon_{ji1}, \dots, \varepsilon_{jiK})^\top$.

On the group level for the group parameters $\mbox{\boldmath{$\vartheta$}}_{j}:= (\mbox{\boldmath{$\theta$}}_{N_{j-1}+1}^\top, \dots, \mbox{\boldmath{$\theta$}}_{N_j}^\top)^\top$ the vector $\mathbf{Y}_{j}=(\mathbf{Y}_{j,\,(j-1)n+1}^\top, \dots, \mathbf{Y}_{j,\,N_j}^\top)^\top$ of all observations at all individuals in the $j$-th group is given by
\begin{equation}\label{7}
\mathbf{Y}_{j}=\left(\mathbb{I}_{r_j}\otimes\left(\mathds{1}_K\,\mathbf{f}(j)^\top\right)\right)\mbox{\boldmath{$\vartheta$}}_{j} + \mbox{\boldmath{$\varepsilon$}}_{j},\quad j=1, \dots, J,
\end{equation}
where 
\begin{equation*}
r_j:=\left\{\begin{array}{ll} n\,, & j=1, \dots, J-1 \\ m\,, & j=J \end{array}\right.,
\end{equation*}
is the number of individuals in the $j$-th group, ``$\otimes$'' denotes the Kronecker product and $\mbox{\boldmath{$\varepsilon$}}_{j}=(\mbox{\boldmath{$\varepsilon$}}_{j,\,(j-1)n+1}^\top, \dots, \mbox{\boldmath{$\varepsilon$}}_{j,\,N_j}^\top)^\top$.

Finally, we introduce the vector of all individual random parameters $\mbox{\boldmath{$\theta$}}= (\mbox{\boldmath{$\vartheta$}}_{1}^\top, \dots,\mbox{\boldmath{$\vartheta$}}_{J}^\top)^\top$ (or, equivalently, $\mbox{\boldmath{$\theta$}}:= (\mbox{\boldmath{$\theta$}}_1^\top, \dots, \mbox{\boldmath{$\theta$}}_N^\top)^\top$), for which 
the full vector $\mathbf{Y}=(\mathbf{Y}_{1}^\top, \dots, \mathbf{Y}_{J}^\top)^\top$ of observations at all individuals in all groups is given by the following formula:
\begin{equation}\label{11}
\mathbf{Y}=\mathrm{Diag}_{j=1}^J\left(\mathbb{I}_{r_j}\otimes\left(\mathds{1}_K\,\mathbf{f}(j)^\top\right)\right)\mbox{\boldmath{$\theta$}} + \mbox{\boldmath{$\varepsilon$}},
\end{equation}
where $\mathrm{Diag}_{s=1}^p(\mathbf{A}_s)$ is the block diagonal matrix with $l_s\times t_s$ blocks $\mathbf{A}_s$, $s=1, \dots, p$,
 and $\mbox{\boldmath{$\varepsilon$}}=(\mbox{\boldmath{$\varepsilon$}}_{1}^\top, \dots, \mbox{\boldmath{$\varepsilon$}}_{J}^\top)^\top$.


Alternatively, for the random vector $\mbox{\boldmath{$\zeta$}}:=\mbox{\boldmath{$\theta$}}-\left(\mathds{1}_N\otimes \mathbb{I}_J\right) \mbox{\boldmath{$\theta$}}_0$ the model (\ref{11}) can be represented in the form
\begin{equation}\label{12}
\mathbf{Y}=\mathrm{Vec}_{j=1}^J\left(\mathds{1}_{r_j}\otimes\left(\mathds{1}_K\,\mathbf{f}(j)^\top\right)\right)\mbox{\boldmath{$\theta$}}_0+\mathrm{Diag}_{j=1}^J\left(\mathbb{I}_{r_j}\otimes\left(\mathds{1}_K\,\mathbf{f}(j)^\top\right)\right)\mbox{\boldmath{$\zeta$}} + \mbox{\boldmath{$\varepsilon$}},
\end{equation}
where $\mathrm{Vec}_{s=1}^p\left(\mathbf{A}_s\right)=(\mathbf{A}_1^\top, \dots, \mathbf{A}_p^\top)^\top$ for some $l_s\times t$ matrices $\mathbf{A}_s$, $s=1, \dots, p$. 

Note that $\mbox{\boldmath{$\zeta$}}= (\mbox{\boldmath{$\zeta$}}_1^\top, \dots, \mbox{\boldmath{$\zeta$}}_N^\top)^\top$ for $\mbox{\boldmath{$\zeta$}}_i=\mbox{\boldmath{$\theta$}}_i-\mbox{\boldmath{$\theta$}}_0$, $i=1, \dots, N$. 
The expected value of $\mbox{\boldmath{$\zeta$}}$ is zero and $\mathrm{Cov}(\mbox{\boldmath{$\zeta$}})=\sigma^2\,\mathbb{I}_N\otimes\textrm{block-diag}(u,\, v\,\mathbb{I}_{J-1})$.

\section{Estimation and Prediction}\label{k3}

In this chapter we determine the best linear unbiased estimator (BLUE) for the population mean parameters $\alpha_j$ and $\mu$ and the best linear unbiased predictor (BLUP) for the individual random parameters $\alpha_{ji}$ and $\mu_{i}$ for $j=1, \dots, J-1$ and $i=1, \dots, N$. 

We denote the mean observation in group $j$ by $\bar{Y}_j=\frac{1}{r_j}\sum_{i=N_{j-1}+1}^{N_j}\bar{Y}_{j,i}$ and the mean observation at individual $i$ in group $j$ by $\bar{Y}_{j,i}=\frac{1}{K}\sum_{k=1}^{K}Y_{jik}$ for all $j=1, \dots, J$, $i=N_{j-1}+1, \dots, N_j$. Then we obtain the following results for the BLUEs of the mean parameters $\mu$ and $\alpha_j$ and the BLUPs of the treatment effects $\mu_i$ and $\alpha_{ji}$.
\begin{thm}\label{t1}
The BLUE for the population intercept parameter $\mu$ is given by 
\begin{equation}\label{mu}
\hat{\mu}=\bar{Y}_{J}
\end{equation}
and the BLUE for the population treatment effect $\alpha_j$ is given by
\begin{equation}
\hat{\alpha_j}= \bar{Y}_{j}-\bar{Y}_{J},\quad j=1, \dots, J-1.
\end{equation}
\end{thm}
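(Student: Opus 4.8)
The plan is to identify the BLUE of $\mbox{\boldmath{$\theta$}}_0=(\mu,\alpha_1,\dots,\alpha_{J-1})^\top$ with the Aitken (generalized least squares) estimator for the marginal form \eqref{12}. Writing $\mathbf{X}:=\mathrm{Vec}_{j=1}^J(\mathds{1}_{r_j}\otimes(\mathds{1}_K\,\mathbf{f}(j)^\top))$ for the fixed-effects design and $\mathbf{V}:=\mathrm{Cov}(\mathbf{Y})$ for the marginal covariance, the BLUE is $\hat{\mbox{\boldmath{$\theta$}}}_0=(\mathbf{X}^\top\mathbf{V}^{-1}\mathbf{X})^{-1}\mathbf{X}^\top\mathbf{V}^{-1}\mathbf{Y}$, and the two stated formulas are to be read off from its components. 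First I would compute $\mathbf{V}$. Since distinct individuals are uncorrelated, $\mathbf{V}$ is block diagonal with one $K\times K$ diagonal block per individual, and using $\mathrm{Cov}(\mbox{\boldmath{$\zeta$}})=\sigma^2\,\mathbb{I}_N\otimes\textrm{block-diag}(u,v\,\mathbb{I}_{J-1})$ together with $\mathbf{f}(j)^\top\,\textrm{block-diag}(u,v\,\mathbb{I}_{J-1})\,\mathbf{f}(j)=u+v$ for $j\le J-1$ and $=u$ for $j=J$, the block for an individual in group $j$ is the compound-symmetric matrix $\sigma^2(c_j\,\mathds{1}_K\mathds{1}_K^\top+\mathbb{I}_K)$ with $c_j=u+v$ in the treatment groups and $c_J=u$ in the control group.

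The key step is a two-stage reduction justified by the Gauss--Markov principle. Within one individual the regression vector $\mathds{1}_K\,\mathbf{f}(j)^\top$ is constant across the $K$ replications, and $\mathds{1}_K$ is an eigenvector of the compound-symmetric block; hence the individual mean $\bar{Y}_{j,i}$ is uncorrelated with every within-individual contrast $\mathbf{a}^\top\mathbf{Y}_{j,i}$, $\mathbf{a}\perp\mathds{1}_K$, and those contrasts have expectation zero. Consequently any linear unbiased estimator can drop its contrast components without losing unbiasedness while only lowering variance, so the BLUE depends on the data solely through the $\bar{Y}_{j,i}$. The same argument applied to the equicorrelated, identically distributed individual means inside a fixed group reduces the problem once more to the $J$ group means $\bar{Y}_1,\dots,\bar{Y}_J$, which are mutually uncorrelated with $\mathrm{E}(\bar{Y}_j)=\mathbf{f}(j)^\top\mbox{\boldmath{$\theta$}}_0$.

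It then remains to solve the reduced $J$-dimensional Aitken problem whose design matrix $\mathbf{X}_g$ has rows $\mathbf{f}(1)^\top,\dots,\mathbf{f}(J)^\top$, i.e. $\mathbf{X}_g\mbox{\boldmath{$\theta$}}_0=(\mu+\alpha_1,\dots,\mu+\alpha_{J-1},\mu)^\top$. Because $\mathbf{X}_g$ is square and nonsingular, the Aitken estimator collapses to $\hat{\mbox{\boldmath{$\theta$}}}_0=\mathbf{X}_g^{-1}\,(\bar{Y}_1,\dots,\bar{Y}_J)^\top$, independently of the group variances $\sigma^2(c_j+1/K)/r_j$; inverting $\mathbf{X}_g$ recovers $\mu$ from the last coordinate and $\alpha_j$ as the difference of the $j$-th and last coordinates, giving exactly $\hat{\mu}=\bar{Y}_J$ and $\hat{\alpha_j}=\bar{Y}_j-\bar{Y}_J$.

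I expect the main obstacle to be the clean justification of the two reduction steps, namely verifying that the within-individual and within-group contrasts are simultaneously mean-free and uncorrelated with the relevant averages, so that discarding them is optimal; the Kronecker- and block-bookkeeping for $\mathbf{V}$ is routine but must be organized carefully. A direct alternative would bypass the reductions by inverting each compound-symmetric block via Sherman--Morrison and substituting into the Aitken formula, but that route is computationally heavier and less transparent, and it obscures the useful fact that the estimator does not depend on $u$, $v$ or $K$.
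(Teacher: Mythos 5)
Your proof is correct, and while it shares the paper's point of departure, the execution is genuinely different. The paper recognizes model \eqref{12} as a linear mixed model, invokes Henderson's mixed-model equations, whose fixed-effects part \eqref{beta} is exactly your Aitken estimator with $\mathbf{V}=\mathbf{Z}\mathbf{G}\mathbf{Z}^\top+\mathbf{R}$, and then obtains $\hat{\boldsymbol{\theta}}_0=\bigl(\bar{Y}_J,\,\bar{Y}_1-\bar{Y}_J,\dots,\bar{Y}_{J-1}-\bar{Y}_J\bigr)^\top$ by unspecified ``linear algebra'' --- essentially the direct Sherman--Morrison route that you mention and set aside at the end of your proposal. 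You replace that computation with two structural observations: (i) a sufficiency-type reduction --- within-individual contrasts, and between-individual contrasts inside a group, have zero mean and are uncorrelated with the corresponding means (the compound-symmetric blocks have $\mathds{1}_K$ as an eigenvector, and distinct individuals are uncorrelated), so any linear unbiased estimator can discard them without losing unbiasedness or gaining variance, leaving only the group means $\bar{Y}_1,\dots,\bar{Y}_J$; and (ii) the reduced design matrix $\mathbf{X}_g$ with rows $\mathbf{f}(j)^\top$ is square and nonsingular, so the weighted least squares solution collapses to $\mathbf{X}_g^{-1}(\bar{Y}_1,\dots,\bar{Y}_J)^\top$ regardless of the group variances. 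Both steps are sound (a small quibble: the individual means within a group are in fact uncorrelated, not merely equicorrelated, though your argument needs only exchangeability of their covariance structure). What each approach buys: yours is more transparent for Theorem~\ref{t1} in isolation, avoids inverting the $NK\times NK$ covariance matrix, and makes visible why the estimator is free of $u$, $v$ and $K$ --- the design is saturated at the group level; the paper's heavier machinery pays off elsewhere, since the same Henderson apparatus simultaneously delivers the BLUP $\hat{\boldsymbol{\zeta}}$ and the blocks $\mathbf{C}_{11}$, $\mathbf{C}_{12}$, $\mathbf{C}_{22}$ of the joint MSE matrix, which are exactly what Theorems~\ref{t2}--\ref{t4} require, whereas your reduction yields only the fixed-effects part and would have to be supplemented to handle prediction.
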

\begin{thm}\label{t2}
If the $i$-th individual is in the control group, the BLUP for the individual intercepts $\mu_{i}$ is given by
\begin{equation}
\hat{\mu}_{i}=\frac{Ku}{Ku+1}\bar{Y}_{J,i}+\frac{1}{Ku+1}\bar{Y}_{J},\quad i=N_{J-1}+1, \dots, N,
\end{equation}
otherwise, if the $i$-th individual is in the $j$-th treatment group, the BLUP is given by
\begin{equation}
\hat{\mu}_{i}=\frac{Ku}{K(v+u)+1}\,(\bar{Y}_{j,i}-\bar{Y}_{j})+\bar{Y}_{J},\quad j=1, \dots, J-1, \quad i=N_{j-1}+1, \dots, N_j.
\end{equation}
If the $i$-th individual is in the $j$-th treatment group, the BLUP for the individual treatment effect $\alpha_{ji}$ is given by
\begin{equation}
\hat{\alpha}_{ji}=\frac{Kv}{K(v+u)+1}\,(\bar{Y}_{j,i}-\bar{Y}_{J})+\frac{Ku+1}{K(v+u)+1}\,(\bar{Y}_{j}-\bar{Y}_{J}),\quad j=1, \dots, J-1,\quad i=N_{j-1}+1, \dots, N_j
\end{equation}
otherwise the BLUP is given by
\begin{equation}\label{alpha}
\hat{\alpha}_{ji}=\bar{Y}_{j}-\bar{Y}_{J},\quad j=1, \dots, J-1,\quad i=1, \dots, N, \quad i\neq N_{j-1}+1, \dots, N_j.
\end{equation}
\end{thm}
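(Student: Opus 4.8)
The plan is to apply the standard mixed-model best linear unbiased prediction formula to representation \eqref{12} and then exploit the fact that the random effects and the errors are uncorrelated across individuals, which makes the relevant covariance matrix block diagonal individual by individual. First I would write \eqref{12} compactly as $\mathbf{Y}=\mathbf{F}\mbox{\boldmath{$\theta$}}_0+\mathbf{Z}\mbox{\boldmath{$\zeta$}}+\mbox{\boldmath{$\varepsilon$}}$ with $\mathbf{F}=\mathrm{Vec}_{j=1}^J(\mathds{1}_{r_j}\otimes(\mathds{1}_K\,\mathbf{f}(j)^\top))$ and $\mathbf{Z}=\mathrm{Diag}_{j=1}^J(\mathbb{I}_{r_j}\otimes(\mathds{1}_K\,\mathbf{f}(j)^\top))$, and set $\mathbf{G}:=\mathrm{Cov}(\mbox{\boldmath{$\zeta$}})=\sigma^2\,\mathbb{I}_N\otimes\textrm{block-diag}(u,v\,\mathbb{I}_{J-1})$ and $\mathbf{V}:=\mathbf{Z}\mathbf{G}\mathbf{Z}^\top+\sigma^2\mathbb{I}_{NK}$. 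Since $\mbox{\boldmath{$\theta$}}=(\mathds{1}_N\otimes\mathbb{I}_J)\mbox{\boldmath{$\theta$}}_0+\mbox{\boldmath{$\zeta$}}$, the BLUP of $\mbox{\boldmath{$\theta$}}$ is $\hat{\mbox{\boldmath{$\theta$}}}=(\mathds{1}_N\otimes\mathbb{I}_J)\hat{\mbox{\boldmath{$\theta$}}}_0+\mathbf{G}\mathbf{Z}^\top\mathbf{V}^{-1}(\mathbf{Y}-\mathbf{F}\hat{\mbox{\boldmath{$\theta$}}}_0)$, where $\hat{\mbox{\boldmath{$\theta$}}}_0=(\hat{\mu},\hat{\alpha}_1,\dots,\hat{\alpha}_{J-1})^\top$ is the BLUE supplied by Theorem~\ref{t1}. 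The individual predictors $\hat{\mu}_i$ and $\hat{\alpha}_{ji}$ are the corresponding entries of the $i$-th block $\hat{\mbox{\boldmath{$\theta$}}}_i$ of $\hat{\mbox{\boldmath{$\theta$}}}$.

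The key simplification is that $\mathbf{G}$, $\mathbf{Z}$ and hence $\mathbf{V}$ are block diagonal with one block per individual. For an individual $i$ in group $j$ the corresponding block of $\mathbf{V}$ is $\mathbf{V}_i=\sigma^2\left(\mathbb{I}_K+a_j\,\mathds{1}_K\mathds{1}_K^\top\right)$ with $a_j:=\mathbf{f}(j)^\top\,\textrm{block-diag}(u,v\,\mathbb{I}_{J-1})\,\mathbf{f}(j)$, so that $a_j=u+v$ for a treatment group and $a_J=u$ for the control group. Inverting $\mathbf{V}_i$ by the Sherman--Morrison formula gives $\mathds{1}_K^\top\mathbf{V}_i^{-1}=\frac{1}{\sigma^2(1+Ka_j)}\mathds{1}_K^\top$, and since the data enter only through $\mathds{1}_K^\top\mathbf{Y}_{j,i}=K\bar{Y}_{j,i}$, the $i$-th block of the correction term collapses to
\begin{equation*}
\hat{\mbox{\boldmath{$\zeta$}}}_i=\frac{K}{1+Ka_j}\,\textrm{block-diag}(u,v\,\mathbb{I}_{J-1})\,\mathbf{f}(j)\left(\bar{Y}_{j,i}-\mathbf{f}(j)^\top\hat{\mbox{\boldmath{$\theta$}}}_0\right),
\end{equation*}
and $\hat{\mbox{\boldmath{$\theta$}}}_i=\hat{\mbox{\boldmath{$\theta$}}}_0+\hat{\mbox{\boldmath{$\zeta$}}}_i$. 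This single formula carries all the cases.

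It then remains to substitute $\hat{\mbox{\boldmath{$\theta$}}}_0$ from Theorem~\ref{t1} and read off the entries. Using $\textrm{block-diag}(u,v\,\mathbb{I}_{J-1})\,\mathbf{f}(j)=(u,v\,\mathbf{e}_j^\top)^\top$ and $\mathbf{f}(j)^\top\hat{\mbox{\boldmath{$\theta$}}}_0=\hat{\mu}+\hat{\alpha}_j=\bar{Y}_j$ for a treatment group, and $(u,\mathbf{0}_{J-1}^\top)^\top$ with $\mathbf{f}(J)^\top\hat{\mbox{\boldmath{$\theta$}}}_0=\hat{\mu}=\bar{Y}_J$ for the control group, one obtains $\hat{\mu}_i$ in both cases and $\hat{\alpha}_{ji}$ for $i$ in group $j$ directly. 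For the treatment-effect predictor the derivation first yields $\hat{\alpha}_{ji}=(\bar{Y}_j-\bar{Y}_J)+\frac{Kv}{K(v+u)+1}(\bar{Y}_{j,i}-\bar{Y}_j)$, and a short rearrangement using $Kv+(Ku+1)=K(v+u)+1$ brings it into the stated symmetric form. Finally, for an individual $i$ that is \emph{not} in treatment group $j$, the vector $\textrm{block-diag}(u,v\,\mathbb{I}_{J-1})\,\mathbf{f}(j')$ (with $j'$ the individual's actual group, or $j'=J$) has a zero in its $(j+1)$-th coordinate, so $\hat{\mbox{\boldmath{$\zeta$}}}_i$ has no component along $\alpha_{ji}$ and $\hat{\alpha}_{ji}=\hat{\alpha}_j=\bar{Y}_j-\bar{Y}_J$, which is \eqref{alpha}; this reflects that such a counterfactual effect is uncorrelated with the data and hence predicted by the BLUE of its mean.

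I expect the main obstacle to be twofold. Computationally, one must carry out the block-diagonal reduction of $\mathbf{V}^{-1}$ cleanly and recognise that the $\mathds{1}_K$-averaging collapses each individual contribution to the scalar residual $\bar{Y}_{j,i}-\mathbf{f}(j)^\top\hat{\mbox{\boldmath{$\theta$}}}_0$; this is where the Sherman--Morrison step does the real work. Conceptually, the delicate point is the correct treatment of the counterfactual effects $\alpha_{ji}$ for individuals outside group $j$: one must verify that the formula, applied to the full vector $\mbox{\boldmath{$\theta$}}_i$ that carries all $J$ coordinates for every individual, assigns them exactly the fitted mean through the vanishing loading in $\textrm{block-diag}(u,v\,\mathbb{I}_{J-1})\,\mathbf{f}(j')$.
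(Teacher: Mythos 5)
Your proposal is correct and follows essentially the same route as the paper: both recognize model \eqref{12} as a linear mixed model and apply Henderson's BLUP formula $\hat{\mbox{\boldmath{$\gamma$}}}=\mathbf{G}\mathbf{Z}^\top(\mathbf{Z}\mathbf{G}\mathbf{Z}^\top+\mathbf{R})^{-1}(\mathbf{Y}-\mathbf{X}\hat{\mbox{\boldmath{$\beta$}}})$, obtaining exactly the paper's expression for $\hat{\mbox{\boldmath{$\zeta$}}}$ (your $\bar{Y}_{j,i}-\mathbf{f}(j)^\top\hat{\mbox{\boldmath{$\theta$}}}_0$ equals the paper's $\bar{Y}_{j,i}-\bar{Y}_j$, resp. $\bar{Y}_{J,i}-\bar{Y}_J$) and then reading off the components. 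Your per-individual block-diagonal reduction via Sherman--Morrison simply makes explicit the ``linear algebra'' the paper leaves unstated, so this is a faithful, slightly more detailed version of the same proof.
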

For the vector $\mbox{\boldmath{$\Psi$}}_0:=(\alpha_{1}, \dots, \alpha_{J-1})^\top$ of the mean treatment effects the BLUE is given by $\hat{\mbox{\boldmath{$\Psi$}}}_0=(\hat{\alpha}_{1}, \dots, \hat{\alpha}_{J-1})^\top$. 

We denote the vector of all individual treatment effects by $\mbox{\boldmath{$\Psi$}}:=(\mbox{\boldmath{$\Psi$}}_1^\top, \dots, \mbox{\boldmath{$\Psi$}}_N^\top)^\top$, where $\mbox{\boldmath{$\Psi$}}_i:=(\alpha_{1\,i}, \dots, \alpha_{J-1\,i})^\top$ is the vector of individual treatment effects for the $i$-th individual, $i=1, \dots, N$. Then the BLUP of $\mbox{\boldmath{$\Psi$}}$ is given by $\hat{\mbox{\boldmath{$\Psi$}}}=(\hat{\mbox{\boldmath{$\Psi$}}}_1^\top, \dots, \hat{\mbox{\boldmath{$\Psi$}}}_N^\top)^\top$, where $\hat{\mbox{\boldmath{$\Psi$}}}_i=(\hat{\alpha}_{1\,i}, \dots, \hat{\alpha}_{J-1\,i})^\top$ is the BLUP for $\mbox{\boldmath{$\Psi$}}_i$, $i=1, \dots, N$.  

The next theorems provide the covariance matrix of $\hat{\mbox{\boldmath{$\Psi$}}}_0$ and the MSE matrix of $\hat{\mbox{\boldmath{$\Psi$}}}$.
\begin{thm}\label{t3}
The covariance matrix of the BLUE $\hat{\mbox{\boldmath{$\Psi$}}}_0$ is given by
\begin{equation}\label{cov}
\mathrm{Cov}\left(\hat{\mbox{\boldmath{$\Psi$}}}_{0}\right)=\sigma^2\left(\frac{Ku+1}{Km}\,\mathds{1}_{J-1}\,\mathds{1}_{J-1}^\top+\frac{K(v+u)+1}{Kn}\,\mathbb{I}_{J-1}\right).
\end{equation}
\begin{thm}\label{t4}
\end{thm}
The mean squared error matrix of the BLUP $\hat{\mbox{\boldmath{$\Psi$}}}$ is given by
\begin{equation}\label{mse}
\mathrm{Cov}\left(\hat{\mbox{\boldmath{$\Psi$}}}-\mbox{\boldmath{$\Psi$}}\right)= \mathbf{B}_{1}+\mathbf{B}_{2}+\mathbf{B}_{2}^\top+\mathbf{B}_{3},
\end{equation}
where
\begin{equation*}
\mathbf{B}_{1}=\sigma^2\mathds{1}_N\mathds{1}_N^\top\otimes\left(\frac{Ku+1}{Km}\,\mathds{1}_{J-1}\,\mathds{1}_{J-1}^\top+\frac{K(v+u)+1}{Kn}\,\mathbb{I}_{J-1}\right),
\end{equation*}
\begin{equation*}
\mathbf{B}_{2}=-\sigma^2v\,\mathds{1}_N\otimes\left(\mathrm{tVec}_{j=1}^{J-1}\left(\frac{1}{n}\mathds{1}_n^\top\otimes\left(\mathbf{e}_j\mathbf{e}_j^\top\right)\right)\,\, \vdots \,\, \mathbf{0}_{(J-1)\times m(J-1)}\right),
\end{equation*}
where $\mathbf{0}_{p\times q}$ denotes the $p\times q$ zero matrix and $\mathrm{tVec}_{s=1}^{p}\left(\mathbf{A}_s\right)=\left(\mathrm{Vec}_{s=1}^{p}\left(\mathbf{A}_s^\top\right)\right)^\top$, and
\begin{equation*}
\mathbf{B}_{3}=\sigma^2v\,\mathrm{block\textrm{-}diag}\left(\mathbb{I}_{n(J-1)^2}-\frac{Kv}{K(v+u)+1}\mathrm{Diag}_{j=1}^{J-1}\left(\left(\mathbb{I}_n-\frac{1}{n}\mathds{1}_n\mathds{1}_n^\top\right)\otimes\left(\mathbf{e}_j\mathbf{e}_j^\top\right) \right),\, \mathbb{I}_{m(J-1)} \right).
\end{equation*}
\end{thm}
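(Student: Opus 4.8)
The plan is to use the explicit predictors of Theorem~\ref{t2} to split the prediction error $\hat{\mbox{\boldmath{$\Psi$}}}-\mbox{\boldmath{$\Psi$}}$ into a common population part and a purely individual part, and then to expand its covariance into the four blocks $\mathbf{B}_1$, $\mathbf{B}_2$, $\mathbf{B}_2^\top$, $\mathbf{B}_3$. The starting point is the observation that, writing $c:=\frac{Kv}{K(v+u)+1}$, the BLUP of a treated individual $i$ in group $j$ can be rewritten as $\hat{\alpha}_{ji}=\hat{\alpha}_j+c\,(\bar{Y}_{j,i}-\bar{Y}_j)$, while $\hat{\alpha}_{j'i}=\hat{\alpha}_{j'}$ for every treatment $j'$ not applied to $i$ (in particular for all $j'$ when $i$ is in the control group). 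Hence $\hat{\mbox{\boldmath{$\Psi$}}}=\mathds{1}_N\otimes\hat{\mbox{\boldmath{$\Psi$}}}_0+\mathbf{C}$, where the correction vector $\mathbf{C}$ has the single nonzero entry $c\,(\bar{Y}_{j,i}-\bar{Y}_j)$ in the treated coordinate of each treated individual and vanishes on the control group. Subtracting $\mbox{\boldmath{$\Psi$}}=\mathds{1}_N\otimes\mbox{\boldmath{$\Psi$}}_0+(\mbox{\boldmath{$\Psi$}}-\mathds{1}_N\otimes\mbox{\boldmath{$\Psi$}}_0)$ with $\mbox{\boldmath{$\Psi$}}_0=(\alpha_1,\dots,\alpha_{J-1})^\top$ yields
\[
\hat{\mbox{\boldmath{$\Psi$}}}-\mbox{\boldmath{$\Psi$}}=\mathbf{p}+\mathbf{q},\qquad \mathbf{p}:=\mathds{1}_N\otimes(\hat{\mbox{\boldmath{$\Psi$}}}_0-\mbox{\boldmath{$\Psi$}}_0),\qquad \mathbf{q}:=\mathbf{C}-(\mbox{\boldmath{$\Psi$}}-\mathds{1}_N\otimes\mbox{\boldmath{$\Psi$}}_0).
\]

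Expanding $\mathrm{Cov}(\mathbf{p}+\mathbf{q})=\mathrm{Cov}(\mathbf{p})+\mathrm{Cov}(\mathbf{p},\mathbf{q})+\mathrm{Cov}(\mathbf{q},\mathbf{p})+\mathrm{Cov}(\mathbf{q})$, I would identify the four summands with $\mathbf{B}_1$, $\mathbf{B}_2$, $\mathbf{B}_2^\top$, $\mathbf{B}_3$. The first is immediate: since $\mbox{\boldmath{$\Psi$}}_0$ is deterministic, $\mathbf{B}_1=\mathrm{Cov}(\mathbf{p})=\mathds{1}_N\mathds{1}_N^\top\otimes\mathrm{Cov}(\hat{\mbox{\boldmath{$\Psi$}}}_0)$, and $\mathrm{Cov}(\hat{\mbox{\boldmath{$\Psi$}}}_0)$ is supplied by Theorem~\ref{t3}. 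For the remaining blocks I would insert $\bar{Y}_{j,i}=\mu_i+\alpha_{ji}+\bar{\varepsilon}_{j,i}$ (with $\bar{\varepsilon}_{j,i}=\frac{1}{K}\sum_k\varepsilon_{jik}$), together with the corresponding group averages $\bar{Y}_j$ and $\bar{Y}_J$, and evaluate every covariance using that the $\mu_i$, $\alpha_{ji}$ and $\varepsilon_{jik}$ are mutually uncorrelated with the variances fixed in Section~\ref{k1}.

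For $\mathbf{B}_2=\mathrm{Cov}(\mathbf{p},\mathbf{q})$ the decisive point is that a counterfactual centred effect $\alpha_{j'i}-\alpha_{j'}$ occurring in $\mathbf{q}$ is uncorrelated with $\hat{\mbox{\boldmath{$\Psi$}}}_0-\mbox{\boldmath{$\Psi$}}_0$ unless individual $i$ actually belongs to group $j'$; a short calculation then leaves $\mathrm{Cov}(\hat{\alpha}_j-\alpha_j,\,q_{ij})=-\sigma^2v/n$ as the only surviving entry for $i$ in group $j$, and stacking over individuals and treatments reproduces exactly the stated $\mathbf{B}_2$ with its vanishing control columns. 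For $\mathbf{B}_3=\mathrm{Cov}(\mathbf{q})$ one first checks that distinct treatment groups and the control group decouple, so $\mathbf{B}_3$ is block diagonal; the control block collapses to $\sigma^2v\,\mathbb{I}_{m(J-1)}$ because there $\mathbf{q}_i=-(\mbox{\boldmath{$\Psi$}}_i-\mbox{\boldmath{$\Psi$}}_0)$. Within a treatment block only the treated coordinate carries a correction, and computing $\mathrm{Cov}(q_{i'j},q_{i''j})$ for two individuals of group $j$ splits into separate $\mu$-, $\alpha$- and $\varepsilon$-contributions producing a $\delta_{i'i''}$ term and a $1/n$ term. The main obstacle is the algebraic collapse of their coefficients: using $c-1=-\frac{Ku+1}{K(v+u)+1}$ one must verify the two identities $c^2(u+1/K)+(c-1)^2v=v(1-c)$ and $-c^2(u+1/K)+vc(2-c)=vc$, which turn the result into $\sigma^2v[(1-c)\delta_{i'i''}+c/n]$, the $(j,j)$ entry of $\mathbf{B}_3$; reassembling these per-coordinate covariances into the Kronecker and block-diagonal forms of $\mathbf{B}_2$ and $\mathbf{B}_3$ then completes the argument.
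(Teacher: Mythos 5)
Your proposal is correct in substance, but it takes a genuinely different route from the paper. The paper never works with the explicit predictors: it embeds model \eqref{12} in the general mixed model \eqref{mod}, invokes Henderson's formula for the joint MSE matrix of $\left(\hat{\mbox{\boldmath{$\theta$}}}_0^\top,\hat{\mbox{\boldmath{$\zeta$}}}^\top\right)^\top$, computes its blocks $\mathbf{C}_{11}$, $\mathbf{C}_{12}$, $\mathbf{C}_{22}$ for this particular model, and then obtains \emph{both} theorems by sandwiching with selection matrices: $\mathrm{Cov}\bigl(\hat{\mbox{\boldmath{$\Psi$}}}_0\bigr)=\mathbf{L}\mathbf{C}_{11}\mathbf{L}^\top$ and $\mathbf{B}_1=(\mathds{1}_N\otimes\mathbf{L})\mathbf{C}_{11}(\mathds{1}_N\otimes\mathbf{L})^\top$, $\mathbf{B}_2=(\mathds{1}_N\otimes\mathbf{L})\mathbf{C}_{12}(\mathbb{I}_N\otimes\mathbf{L})^\top$, $\mathbf{B}_3=(\mathbb{I}_N\otimes\mathbf{L})\mathbf{C}_{22}(\mathbb{I}_N\otimes\mathbf{L})^\top$, where $\mathbf{L}=(\mathbf{0}_{J-1}\,\vdots\,\mathbb{I}_{J-1})$. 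You instead start from the scalar formulas of Theorem~\ref{t2}, rewrite the BLUP of a treated individual as $\hat{\alpha}_j+c\,(\bar{Y}_{j,i}-\bar{Y}_j)$ (valid because $c+\frac{Ku+1}{K(v+u)+1}=1$), split the error into $\mathbf{p}+\mathbf{q}$, and evaluate all covariances elementarily from the model assumptions. I checked the key computations: with $s=u+v+1/K$ one has $c=v/s$, hence $c^2s=cv$, so your two identities hold and give $\mathrm{Cov}(q_{i'j},q_{i''j})=\sigma^2v\left((1-c)\delta_{i'i''}+c/n\right)$, which is exactly the $(j,j)$-block entry of the stated $\mathbf{B}_3$; likewise $\mathrm{Cov}(\hat{\alpha}_j-\alpha_j,q_{ij})=c\,\mathrm{Cov}(\bar{Y}_j,\bar{Y}_{j,i}-\bar{Y}_j)-\mathrm{Cov}(\bar{Y}_j,\alpha_{ji})=0-\sigma^2v/n$, matching $\mathbf{B}_2$, and the counterfactual effects $\alpha_{j'i}$ (appearing in no observation) account for the zero control columns of $\mathbf{B}_2$ and the block-diagonality and $\sigma^2v\,\mathbb{I}_{m(J-1)}$ control block of $\mathbf{B}_3$. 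The paper's route buys generality (the $\mathbf{C}$-blocks come from one citable result and serve Theorems~\ref{t1}--\ref{t4} simultaneously); yours is more elementary and makes the structure of the three blocks transparent.

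One incompleteness you should repair: the statement comprises Theorem~\ref{t3} as well, and you use its conclusion (``$\mathrm{Cov}(\hat{\mbox{\boldmath{$\Psi$}}}_0)$ is supplied by Theorem~\ref{t3}'') without proving it, which, taken literally, is circular. Within your own framework this is a two-line fix: by Theorem~\ref{t1}, $\hat{\alpha}_j=\bar{Y}_j-\bar{Y}_J$; means of disjoint groups are uncorrelated; and
\begin{equation*}
\mathrm{Var}(\bar{Y}_j)=\frac{\sigma^2\left(K(u+v)+1\right)}{Kn},\qquad \mathrm{Var}(\bar{Y}_J)=\frac{\sigma^2(Ku+1)}{Km},
\end{equation*}
so that $\mathrm{Cov}(\hat{\alpha}_j,\hat{\alpha}_{j'})=\delta_{jj'}\,\frac{\sigma^2\left(K(u+v)+1\right)}{Kn}+\frac{\sigma^2(Ku+1)}{Km}$, which is precisely \eqref{cov}. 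With that added, your argument is complete.
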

For the proofs of Theorems~\ref{t1}-\ref{t4} see Appendix~\ref{A1}.


\section{Optimal Design}

In this chapter we optimize the numbers $n$ and $m$ of individuals in the treatment and control groups, respectively. We define the exact experimental design as

\begin{equation}
\xi = \left(\begin{array}{cccc}1 & \dots & J-1 & J \\ n & \dots & n & m \end{array}\right),
\end{equation}
where the indexes $1$, \dots, $J-1$ denote the treatment groups and the index $J$ is used for the control group. 

For analytical purposes, we also define the approximate design:
\begin{equation}\label{adesign}
\xi = \left(\begin{array}{cccc}1 & \dots & J-1 & J \\ w & \dots & w & 1-(J-1)w \end{array}\right),\quad w\in \left(0, \frac{1}{J-1}\right),
\end{equation}
where $w=\frac{n}{N}$ is the weight of a treatment group and $1-(J-1)w=\frac{m}{N}$ is the weight of the control group. Then only the optimal weight $w^*$ of a treatment group has to be determined. 

\subsection{\textit{A}-criterion}\label{4.1}

For the estimation of the population treatment effects $\mbox{\boldmath{$\Psi$}}_0$ the \textit{A}-criterion for an exact design is defined as the trace of the covariance matrix of the BLUE $\hat{\mbox{\boldmath{$\Psi$}}}_0$:
\begin{equation*}
\mathrm{A}_{\Psi_0}(\xi) = \textrm{tr}\left( \textrm{Cov}\left(\hat{\mbox{\boldmath{$\Psi$}}}_0\right) \right).
\end{equation*}
We determine the trace of the covariance matrix \eqref{cov},
replace $n$ by $Nw$ and $m$ by $N(1-w)$ and obtain the next results for an approximate design.
\begin{thm}
The \textit{A}-criterion for the estimation of the population treatment effects $\mbox{\boldmath{$\Psi$}}_0$ is given for an approximate design by
\begin{equation}\label{acrp}
\mathrm{A}_{\Psi_0}(w) = \frac{\sigma^2(J-1)}{K\,N}\left(\frac{K(v+u)+1}{w}+\frac{Ku+1}{1-(J-1)w}\right).
\end{equation}
\end{thm}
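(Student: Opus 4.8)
The plan is to start from the explicit covariance matrix \eqref{cov} supplied by Theorem~\ref{t3} and simply evaluate its trace, since the \textit{A}-criterion is defined as $\mathrm{A}_{\Psi_0}(\xi)=\textrm{tr}(\textrm{Cov}(\hat{\mbox{\boldmath{$\Psi$}}}_0))$. The covariance matrix is a sum of two structured pieces, so the trace splits additively: I would compute $\textrm{tr}(\mathds{1}_{J-1}\mathds{1}_{J-1}^\top)$ and $\textrm{tr}(\mathbb{I}_{J-1})$ separately. The first trace equals $J-1$ because $\mathds{1}_{J-1}^\top\mathds{1}_{J-1}=J-1$ (equivalently, the diagonal entries of the rank-one matrix $\mathds{1}_{J-1}\mathds{1}_{J-1}^\top$ are all $1$, and there are $J-1$ of them), and the second trace is also $J-1$ since it is the identity of that dimension.

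Carrying this through, the trace of \eqref{cov} becomes
\begin{equation*}
\textrm{tr}\left(\textrm{Cov}(\hat{\mbox{\boldmath{$\Psi$}}}_0)\right)=\sigma^2\left(\frac{Ku+1}{Km}(J-1)+\frac{K(v+u)+1}{Kn}(J-1)\right),
\end{equation*}
which I would factor as $\frac{\sigma^2(J-1)}{K}\left(\frac{Ku+1}{m}+\frac{K(v+u)+1}{n}\right)$. This is the exact-design form of the criterion.

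The remaining step is purely the substitution prescribed in the text: replace $n$ by $Nw$ and $m$ by $N(1-(J-1)w)$ to pass to the approximate design \eqref{adesign}. Here I must be careful with the control-group weight: according to \eqref{adesign} the control weight is $1-(J-1)w=m/N$, so $m=N(1-(J-1)w)$ rather than $N(1-w)$ (the phrasing ``replace $m$ by $N(1-w)$'' in the lead-in appears to be shorthand; the design definition fixes the correct substitution). After substituting, the factor $\frac{1}{K}$ combines with the $\frac{1}{N}$ coming out of each denominator to yield the overall prefactor $\frac{\sigma^2(J-1)}{KN}$, leaving $\frac{K(v+u)+1}{w}+\frac{Ku+1}{1-(J-1)w}$ inside the parentheses, which is exactly \eqref{acrp}.

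There is no genuine obstacle here, as the result is a direct consequence of Theorem~\ref{t3}; the only point demanding attention is bookkeeping—correctly reading off the two elementary traces and, above all, using the consistent substitution $m=N(1-(J-1)w)$ from the design \eqref{adesign} so that the control-group term lands with the denominator $1-(J-1)w$ rather than $1-w$.
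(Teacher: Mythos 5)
Your proof is correct and follows essentially the same route as the paper, which likewise obtains \eqref{acrp} by taking the trace of the covariance matrix \eqref{cov} from Theorem~\ref{t3} and substituting the approximate-design group sizes (the paper gives no separate proof beyond this lead-in). You were also right to read the paper's phrase ``replace $m$ by $N(1-w)$'' as shorthand: consistency with the design \eqref{adesign} and with the stated formula \eqref{acrp} requires $m=N(1-(J-1)w)$, exactly as you used.
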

\begin{thm}
The \textit{A}-optimal weight $w^*_{A,\Psi_0}$ for the estimation of the population treatment effects is given by
\begin{equation}\label{wap}
w^*_{A,\Psi_0} = \frac{1}{J-1+\sqrt{J-1}\sqrt{\frac{Ku+1}{K(v+u)+1}}}.
\end{equation}
\end{thm}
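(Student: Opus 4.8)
The plan is to minimize the \textit{A}-criterion \eqref{acrp} over the admissible interval $w \in \left(0, \frac{1}{J-1}\right)$ by elementary calculus. Since the prefactor $\frac{\sigma^2(J-1)}{KN}$ is a positive constant independent of $w$, minimizing the criterion is equivalent to minimizing
\[
g(w) := \frac{K(v+u)+1}{w} + \frac{Ku+1}{1-(J-1)w}.
\]
To streamline the algebra I would abbreviate $a := K(v+u)+1$ and $b := Ku+1$, both strictly positive, so that $g(w) = \frac{a}{w} + \frac{b}{1-(J-1)w}$.

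First I would differentiate, obtaining
\[
g'(w) = -\frac{a}{w^2} + \frac{b(J-1)}{\left(1-(J-1)w\right)^2},
\]
and set $g'(w) = 0$, which gives $\frac{a}{w^2} = \frac{b(J-1)}{\left(1-(J-1)w\right)^2}$, equivalently $\left(\frac{1-(J-1)w}{w}\right)^2 = \frac{b(J-1)}{a}$. On the feasible interval both $w$ and $1-(J-1)w$ are strictly positive, so I may take the positive square root to get $\frac{1-(J-1)w}{w} = \sqrt{(J-1)\,b/a}$, that is $\frac{1}{w} = (J-1) + \sqrt{J-1}\,\sqrt{b/a}$. Solving for $w$ and resubstituting $a$ and $b$ reproduces exactly the claimed formula \eqref{wap}. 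The one step requiring genuine care is the sign choice in the square-root extraction: it is precisely the positivity of $1-(J-1)w$ on the admissible range that forces the positive root and thereby yields the stated closed form.

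It then remains to verify that this stationary point is the global minimizer and lies in the interior. Because $\sqrt{J-1}\,\sqrt{b/a} > 0$, we have $1/w > J-1$, hence $0 < w < \frac{1}{J-1}$, so the candidate is admissible. To confirm it is a minimum I would observe that each summand of $g$ is convex on $\left(0, \frac{1}{J-1}\right)$ (the second because $1-(J-1)w$ is a positive affine function there), so $g$ is strictly convex and its unique stationary point is the global minimum; equivalently, one notes that $g(w) \to +\infty$ as $w$ approaches either endpoint, which already forces the interior critical point to be the minimizer. This establishes $w^*_{A,\Psi_0}$ as claimed.
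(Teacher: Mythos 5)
Your proof is correct and is essentially the argument the paper leaves implicit: the paper states this theorem without any proof, and minimizing the criterion \eqref{acrp} by elementary calculus---setting the derivative of $\frac{K(v+u)+1}{w}+\frac{Ku+1}{1-(J-1)w}$ to zero and taking the positive root of the resulting quadratic relation---is exactly the intended derivation. Your added care regarding the sign choice in the square-root extraction, the admissibility of the stationary point in $\left(0,\frac{1}{J-1}\right)$, and the strict convexity of the objective (which the paper never addresses) makes the argument complete and rigorous.
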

Note that for large values of the intercepts variance ($u\rightarrow\infty$) the optimal weight \eqref{wap} tends to the value $w^*_{A,\Psi_0} = \frac{1}{J-1+\sqrt{J-1}}$, which coincides with the optimal weight in the fixed effects model (see e.g. \cite{bai}, ch.~3 or \cite{schw}, ch.~3) . If the treatment effects variance takes a very large value ($v\rightarrow\infty$), the limiting optimal design assigns all observations to be taken in the treatment groups: $w^*_{A,\Psi_0} = \frac{1}{J-1}$. If both variances are large and the variance ratio $b=v/u$ is fixed, the limiting optimal design is given by 
\begin{equation*}
w^*_{A,\Psi_0} = \frac{1}{J-1+\sqrt{J-1}\sqrt{\frac{1}{1+b}}}.
\end{equation*}

The \textit{A}-criterion for the prediction of the individual treatment effects $\mbox{\boldmath{$\Psi$}}_i$ is defined for an exact design as the trace of the mean squared error matrix of the BLUP $\hat{\mbox{\boldmath{$\Psi$}}}$:
\begin{equation*}
\mathrm{A}_{\Psi}(\xi) = \textrm{tr}\left( \textrm{Cov}\left(\hat{\mbox{\boldmath{$\Psi$}}}-\mbox{\boldmath{$\Psi$}}\right) \right).
\end{equation*}
The next theorem presents the \textit{A}-criterion for an approximate design.
\begin{thm}\label{t7}
The \textit{A}-criterion for the prediction of the individual treatment effects $\mbox{\boldmath{$\Psi$}}_i$ is given for an approximate design by
\begin{equation}\label{acri}
\mathrm{A}_{\Psi}(w) = \sigma^2(J-1)\left(\frac{K(v+u)+1}{Kw}+\frac{Ku+1}{K(1-(J-1)w)}+v\left(N-2-\frac{Kv\,(Nw-1)}{K(v+u)+1}\right)\right).
\end{equation}
\end{thm}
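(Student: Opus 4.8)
The plan is to start from the mean squared error decomposition $\mathrm{Cov}(\hat{\mbox{\boldmath{$\Psi$}}}-\mbox{\boldmath{$\Psi$}}) = \mathbf{B}_{1} + \mathbf{B}_{2} + \mathbf{B}_{2}^\top + \mathbf{B}_{3}$ furnished by Theorem~\ref{t4}, and to exploit the linearity of the trace together with $\textrm{tr}(\mathbf{B}_{2}^\top) = \textrm{tr}(\mathbf{B}_{2})$, so that $\mathrm{A}_{\Psi}(\xi) = \textrm{tr}(\mathbf{B}_{1}) + 2\,\textrm{tr}(\mathbf{B}_{2}) + \textrm{tr}(\mathbf{B}_{3})$. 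I would evaluate the three traces separately and then substitute $n = Nw$ and $m = N(1-(J-1)w)$ to pass to the approximate design.

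For $\mathbf{B}_{1}$ I would use multiplicativity of the trace over Kronecker products of square factors: since $\textrm{tr}(\mathds{1}_N\mathds{1}_N^\top) = N$, $\textrm{tr}(\mathds{1}_{J-1}\mathds{1}_{J-1}^\top) = J-1$ and $\textrm{tr}(\mathbb{I}_{J-1}) = J-1$, this term collapses to $\sigma^2 N(J-1)\big(\tfrac{Ku+1}{Km} + \tfrac{K(v+u)+1}{Kn}\big)$, which after substitution produces exactly the first two summands of \eqref{acri}. For the block-diagonal $\mathbf{B}_{3}$ I add the traces block by block: $\textrm{tr}(\mathbb{I}_{n(J-1)^2}) = n(J-1)^2$, $\textrm{tr}(\mathbb{I}_{m(J-1)}) = m(J-1)$, and each inner block $(\mathbb{I}_n - \tfrac{1}{n}\mathds{1}_n\mathds{1}_n^\top)\otimes(\mathbf{e}_j\mathbf{e}_j^\top)$ has trace $(n-1)\cdot 1$, so the $J-1$ of them contribute $(J-1)(n-1)$. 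Using $n(J-1)+m = N$ this gives $\textrm{tr}(\mathbf{B}_{3}) = \sigma^2 v(J-1)\big(N - \tfrac{Kv(n-1)}{K(v+u)+1}\big)$.

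The delicate step is $\textrm{tr}(\mathbf{B}_{2})$, because $\mathbf{B}_{2}$ is \emph{not} a Kronecker product of square matrices: its first factor $\mathds{1}_N$ is an $N\times 1$ column and its second factor $M = \big(\mathrm{tVec}_{j=1}^{J-1}(\tfrac{1}{n}\mathds{1}_n^\top\otimes\mathbf{e}_j\mathbf{e}_j^\top) \ \vdots\ \mathbf{0}_{(J-1)\times m(J-1)}\big)$ is the rectangular $(J-1)\times N(J-1)$ matrix. I would observe that $\mathds{1}_N\otimes M$ is simply $N$ vertical copies of $M$ stacked on top of one another, so that, viewing the square $N(J-1)\times N(J-1)$ result as an $N\times N$ array of $(J-1)\times(J-1)$ blocks, its $i$-th diagonal block equals the $i$-th block-column of $M$. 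Summing the traces of these diagonal blocks, the $m$ block-columns lying in the zero part contribute nothing, whereas each block-column belonging to a treatment-group individual equals $\tfrac{1}{n}\mathbf{e}_j\mathbf{e}_j^\top$ and has trace $\tfrac{1}{n}$; as there are $n(J-1)$ treatment individuals the total is $(J-1)$, whence $\textrm{tr}(\mathbf{B}_{2}) = -\sigma^2 v(J-1)$. Unpacking the non-standard $\vdots$-augmentation and the $\mathrm{tVec}$ notation to get this block-column bookkeeping right is the only real obstacle.

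Finally I collect the pieces: the two $v$-terms combine into $\sigma^2 v(J-1)\big(N - 2 - \tfrac{Kv(n-1)}{K(v+u)+1}\big)$, and after replacing $n$ by $Nw$ and $m$ by $N(1-(J-1)w)$ throughout, the sum reduces to \eqref{acri}. Everything past the evaluation of $\textrm{tr}(\mathbf{B}_{2})$ is routine algebra.
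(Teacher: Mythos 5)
Your proposal is correct and follows essentially the same route as the paper: it applies the decomposition $\mathrm{Cov}(\hat{\mbox{\boldmath{$\Psi$}}}-\mbox{\boldmath{$\Psi$}})=\mathbf{B}_1+\mathbf{B}_2+\mathbf{B}_2^\top+\mathbf{B}_3$ from Theorem~\ref{t4}, computes $\mathrm{tr}(\mathbf{B}_1)$, $\mathrm{tr}(\mathbf{B}_2)$ and $\mathrm{tr}(\mathbf{B}_3)$, and substitutes $n=Nw$, $m=N(1-(J-1)w)$, which is exactly the paper's proof. In fact your write-up supplies the block-column bookkeeping for $\mathrm{tr}(\mathbf{B}_2)$ and the Kronecker-trace evaluations that the paper states without derivation, and your three trace values agree with those in the paper.
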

\begin{proof}
The result of Theorem~\ref{t7} follows from (\ref{mse}) and
\begin{equation*}
\mathrm{tr}(\mathbf{B}_1) = \frac{\sigma^2(J-1)}{K}\left(\frac{K(v+u)+1}{w}+\frac{Ku+1}{(1-(J-1)w)}\right),
\end{equation*}
\begin{equation*}
\mathrm{tr}(\mathbf{B}_2) = -\sigma^2(J-1)\,v
\end{equation*}
and
\begin{equation*}
\mathrm{tr}(\mathbf{B}_3) = \sigma^2(J-1)\,v\left(N-\frac{Kv\,(Nw-1)}{K(v+u)+1}\right).
\end{equation*}
\end{proof}
Note that there is no explicit formula for the optimal weight in this case. However, it is easy to see that there is a unique solution $w^*$, which may be determined numerically for given values of $N$, $J$, $K$, $u$ and $v$. To illustrate the behavior of optimal designs, we consider a numerical example.

\textbf{Example 1.} 
Let the total number of individuals be $N=100$ and the number of observations per individual be $K=10$ and the variance ratio $b=v/u$ be fixed by the values $2$, $0.6$ and $0.001$. The next graphics (Figure 1 and Figure 2) illustrate the behavior of the \textit{A}-optimal weight in dependence on the treatment effects variance for the special cases of one (left panel) and two (right panel) treatment groups ($J=2$ and $J=3$, respectively). The parameter $\rho=v/(1+v)$ is used instead of the variance parameter $v$ to cover all values of the treatment effects variance by a finite interval. On the graphics the solid, dashed and dotted lines present the optimal weight for the values $2$, $0.6$ and $0.001$ of the ratio $b$. Note that the optimal weight $w^*$ takes all its values in the intervals $(0,1)$ and $(0,0.5)$ in the models with one and two treatment groups, respectively. 
\begin{figure}[ht]
    \begin{minipage}[]{8.2 cm}
       \centering
       \includegraphics[width=78mm]{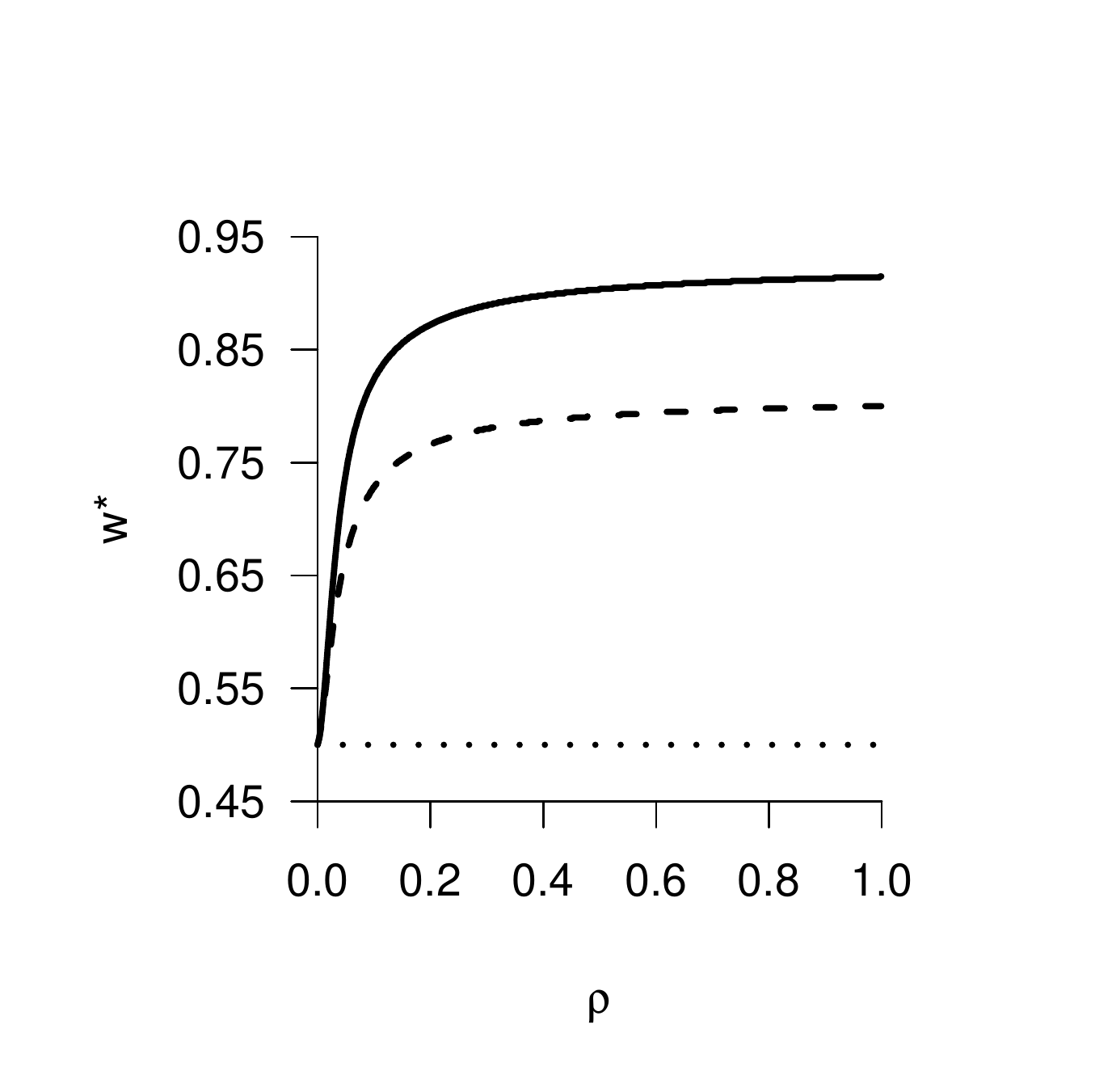}
       \label{a1}
       \end{minipage}
       \begin{minipage}[]{8.2 cm}
       \centering
       \includegraphics[width=78mm]{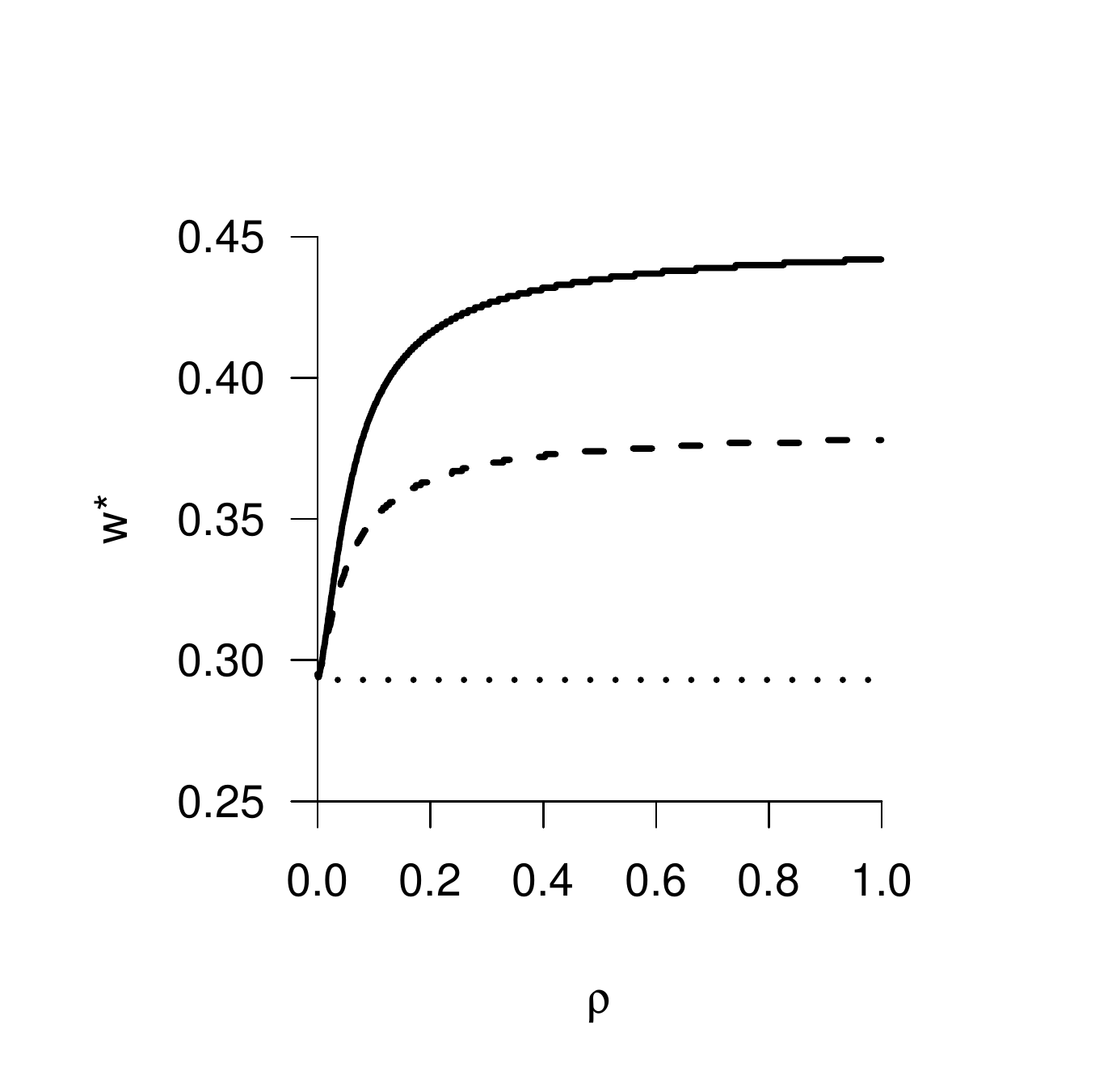}
       \label{a2}
       \end{minipage}
       \vspace{-1mm}
       \hspace*{5 mm}
       \begin{minipage}[]{6.5 cm}
       Figure 1: \textit{A}-optimal weight $w^*$ for one treatment group and variance ratios $b=2$ (solid line), $b=0.6$ (dashed line) and $b=0.001$ (dotted line)
       \end{minipage}
       \hspace{15 mm}
       \begin{minipage}[]{6.5 cm}
       Figure 2: \textit{A}-optimal weight $w^*$ for two treatment groups and variance ratios $b=2$ (solid line), $b=0.6$ (dashed line) and $b=0.001$ (dotted line)
       \end{minipage}
    \end{figure} 
			
For the models with one and two treatment groups the optimal the optimal weights start (for $\rho \rightarrow 0$) at points $w^*=0.5$ and $w^*\approx0.29$, respectively. This may be explained by the fact that the optimal designs for the fixed effects models are equal to $w^*_{A,fix} = \frac{1}{J-1+\sqrt{J-1}}$ and result in $w^*_{A,fix} = 0.5$ for $J=2$ and $w^*_{A,fix} = \frac{1}{2+\sqrt{2}}\approx0.29$ for $J=3$. The optimal weights increase with increasing variance of the individual treatment effects with limiting values (for $\rho\rightarrow 1$) $w^*=0.91$, $w^*=0.80$ and $w^*\approx0.50$ for $J=2$ and $w^*=0.44$, $w^*=0.38$ and $w^*\approx0.29$ for $J=3$ for $b=2$, $b=0.6$ and $b=0.001$, respectively.

Figure 3 and Figure 4 present the efficiency of the optimal weight $w^*_{A,fix}$ from the fixed effects model for the present model for one (left panel) and two (right panel) treatment groups for the values $2$, $0.6$ and $0.001$ of the ratio $b$.

\begin{figure}[ht]
    \begin{minipage}[]{8.2 cm}
       \centering
       \includegraphics[width=78mm]{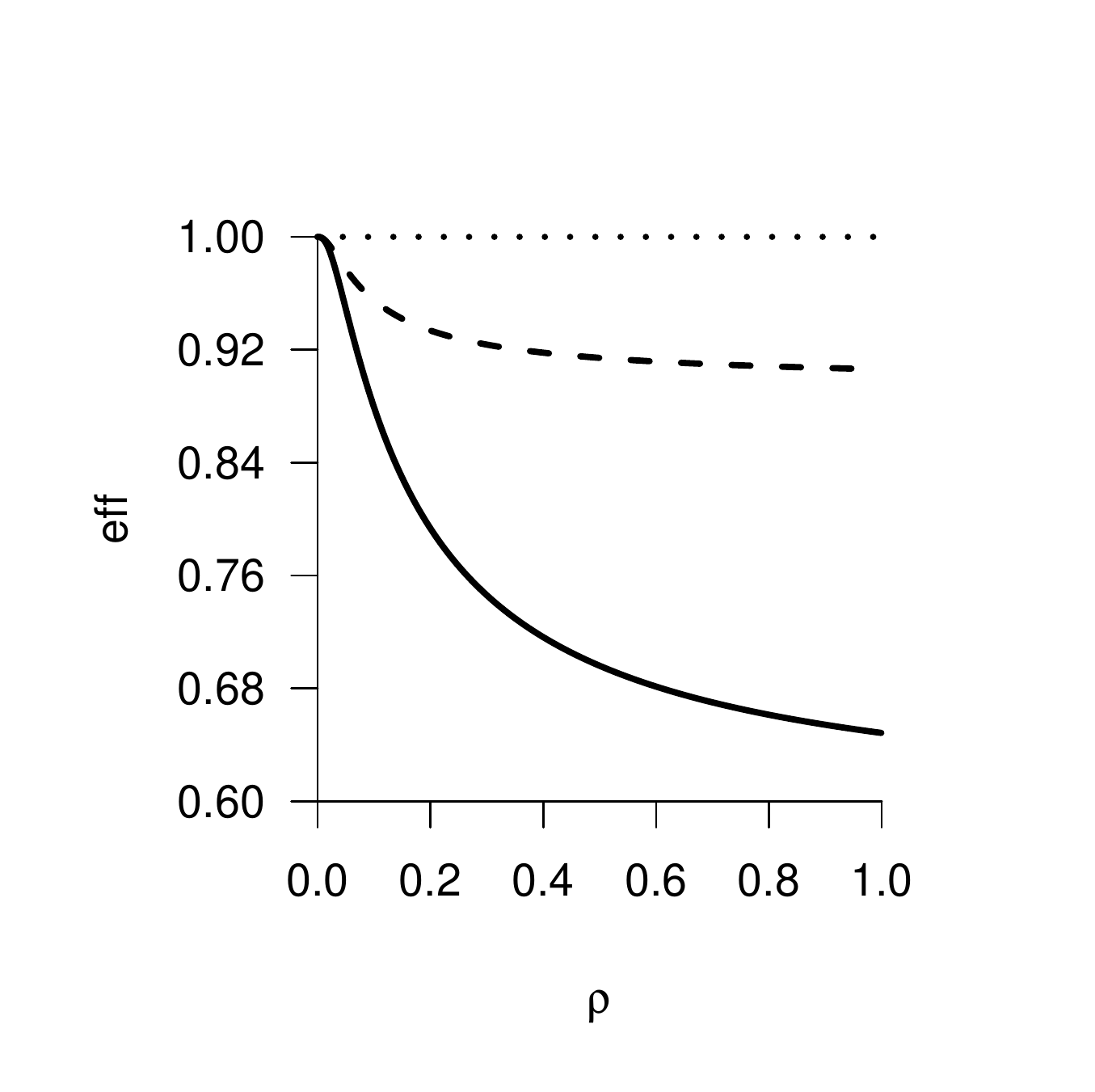}
       \label{a1eff}
       \end{minipage}
       \begin{minipage}[]{8.2 cm}
       \centering
       \includegraphics[width=78mm]{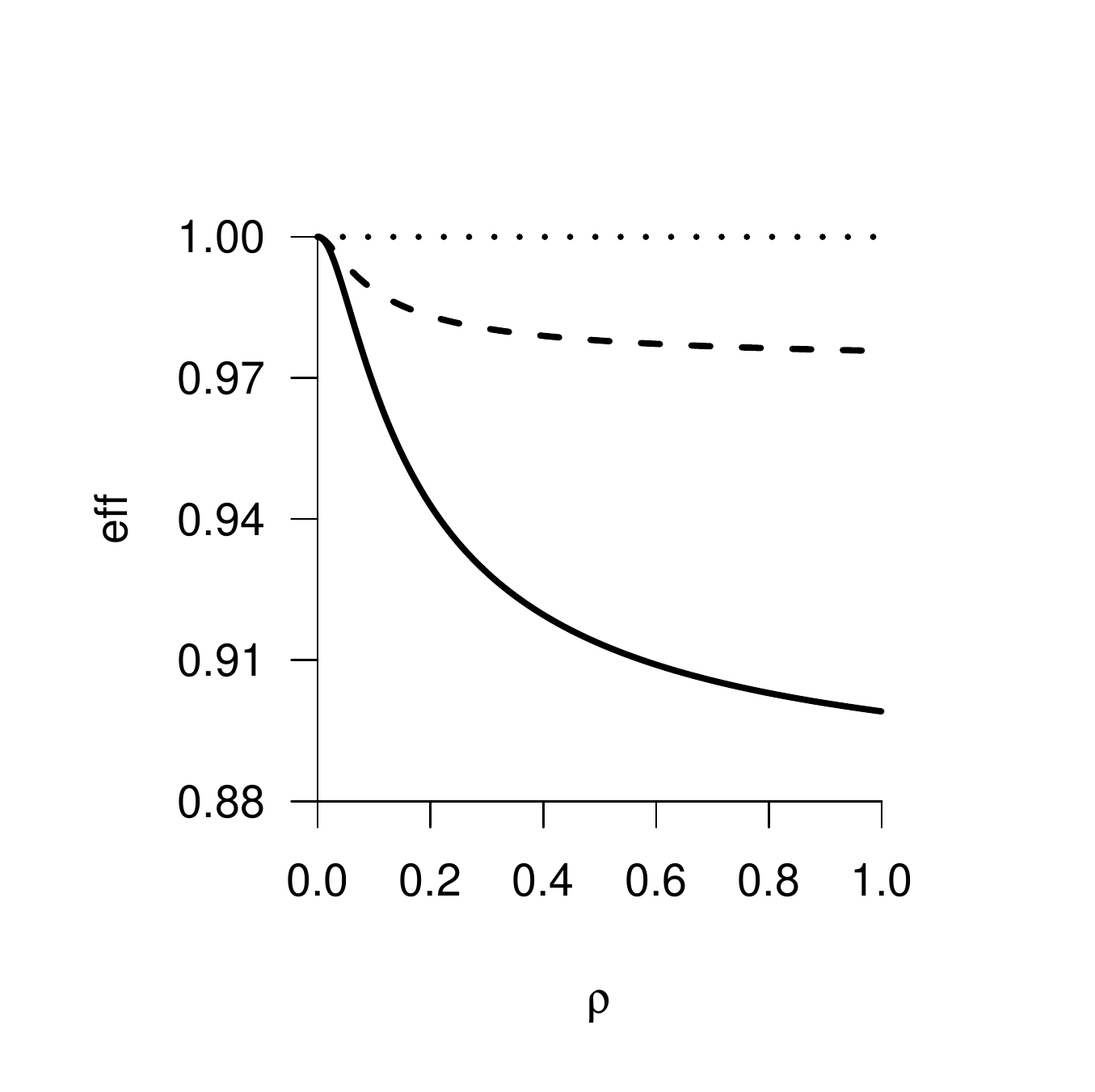}
       \label{a2eff}
       \end{minipage}
       \vspace{-1mm}
       \hspace*{5 mm}
       \begin{minipage}[]{6.5 cm}
       Figure 3: Efficiency of \textit{A}-optimal design in fixed effects model for one treatment group and variance ratios $b=2$ (solid line), $b=0.6$ (dashed line) and $b=0.001$ (dotted line)
       \end{minipage}
       \hspace{15 mm}
       \begin{minipage}[]{6.5 cm}
       Figure 4: Efficiency of \textit{A}-optimal design in fixed effects model for two treatment groups and variance ratios $b=2$ (solid line), $b=0.6$ (dashed line) and $b=0.001$ (dotted line)
       \end{minipage}
    \end{figure} 
		
For both particular models the efficiencies start at point $1$ and decrease with limits $\textit{eff}=0.65$, $\textit{eff}=0.90$ and $\textit{eff}\approx1$ for $J=2$ and $\textit{eff}=0.89$, $\textit{eff}=0.98$ and $\textit{eff}\approx1$ for $J=3$ for $b=2$, $b=0.6$ and $b=0.001$.

\subsection{D- and E-criterion}
In this section we consider \textit{D}- and \textit{E}-optimality criteria for the estimation and the prediction in multiple group models. We consider the general case of model (\ref{12}) for the estimation of population parameters and we restrict ourselves to the special case $J=2$ for the prediction of individual treatment effects.

For further considerations we will use the following result.

\begin{lem}\label{l1}
The eigenvalues of the covariance matrix of the BLUE $\hat{\mbox{\boldmath{$\Psi$}}}_0$ are
\begin{equation*}
\lambda_1=\frac{\sigma^2}{K}\left(\frac{(Ku+1)(J-1)}{\left(N-(J-1)\,n\right)}+\frac{K(v+u)+1}{n}\right)
\end{equation*}
with algebraic multiplicity $1$ and
\begin{equation*}
\lambda_2=\frac{\sigma^2(K(v+u)+1)}{K\,n}
\end{equation*}
with algebraic multiplicity $J-2$.
\end{lem}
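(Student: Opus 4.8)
The plan is to read off the eigenvalues directly from the closed form of the covariance matrix established in \eqref{cov}, exploiting its rank-one-plus-scalar structure. First I would rewrite $\mathrm{Cov}(\mbox{\boldmath{$\Psi$}}_0)$ as $a\,\mathds{1}_{J-1}\mathds{1}_{J-1}^\top + c\,\mathbb{I}_{J-1}$, where $a=\sigma^2(Ku+1)/(Km)$ and $c=\sigma^2(K(v+u)+1)/(Kn)$. Both summands are symmetric and the second is a scalar multiple of the identity, so the two matrices commute and share a common eigenbasis; hence it suffices to diagonalise the rank-one matrix $\mathds{1}_{J-1}\mathds{1}_{J-1}^\top$ and then account for the additive shift.

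Next I would invoke the standard spectral decomposition of the rank-one matrix $\mathds{1}_{J-1}\mathds{1}_{J-1}^\top$. Since $\mathds{1}_{J-1}\mathds{1}_{J-1}^\top\,\mathds{1}_{J-1}=(\mathds{1}_{J-1}^\top\mathds{1}_{J-1})\,\mathds{1}_{J-1}=(J-1)\,\mathds{1}_{J-1}$, the vector $\mathds{1}_{J-1}$ is an eigenvector with eigenvalue $J-1$, and every vector in the orthogonal complement $\{\mathbf{x}:\mathds{1}_{J-1}^\top\mathbf{x}=0\}$, which has dimension $J-2$, is annihilated and hence belongs to the eigenvalue $0$. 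Adding $c\,\mathbb{I}_{J-1}$ keeps these eigenvectors and shifts every eigenvalue by $c$, so $\mathrm{Cov}(\mbox{\boldmath{$\Psi$}}_0)$ has the eigenvalue $a(J-1)+c$ with algebraic multiplicity $1$ and the eigenvalue $c$ with algebraic multiplicity $J-2$ (the two multiplicities summing to the full dimension $J-1$, as they must).

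Finally I would substitute the control-group size $m=N-(J-1)n$ and simplify. This turns $a(J-1)+c$ into $\frac{\sigma^2}{K}\left(\frac{(Ku+1)(J-1)}{N-(J-1)n}+\frac{K(v+u)+1}{n}\right)=\lambda_1$, while the shifted zero eigenvalue becomes $c=\frac{\sigma^2(K(v+u)+1)}{Kn}=\lambda_2$, matching both the claimed expressions and their multiplicities. I do not anticipate any genuine obstacle: this is a routine rank-one eigenvalue computation, and the only points deserving a moment of care are correctly bookkeeping the multiplicities ($1$ for the $\mathds{1}_{J-1}$ direction and $J-2$ for its orthogonal complement) and remembering to reinsert $m=N-(J-1)n$ before comparing with the stated formulas.
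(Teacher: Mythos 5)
Your proposal is correct, and in substance it is the same computation as the paper's: both rest on the spectrum of the matrix $a\,\mathds{1}_{J-1}\mathds{1}_{J-1}^\top + c\,\mathbb{I}_{J-1}$ from \eqref{cov}. The only difference is mechanical: the paper sets up the characteristic equation $\det\left(\mathrm{Cov}\left(\hat{\mbox{\boldmath{$\Psi$}}}_0\right)-\lambda\,\mathbb{I}_{J-1}\right)=0$ and asserts the factorization of this determinant into $(\lambda_1-\lambda)(\lambda_2-\lambda)^{J-2}$, whereas you exhibit the eigenbasis explicitly ($\mathds{1}_{J-1}$ with eigenvalue $a(J-1)+c$, and its $(J-2)$-dimensional orthogonal complement with eigenvalue $c$) and then substitute $m=N-(J-1)n$. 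Your version is marginally more self-contained, since it supplies the justification for the determinant factorization and the multiplicities that the paper states without proof; otherwise the two arguments are interchangeable.
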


\begin{proof}
To determine the eigenvalues of the covariance matrix $\textrm{Cov}\left(\hat{\mbox{\boldmath{$\Psi$}}}_0\right)$ we solve the equation
\begin{equation}\label{eig}
\textrm{det}\left( \textrm{Cov}\left(\hat{\mbox{\boldmath{$\Psi$}}}_0\right)-\lambda\,\mathbb{I}_{J-1} \right)=0,
\end{equation}
where $\lambda$ denotes an eigenvalue of $\textrm{Cov}\left(\hat{\mbox{\boldmath{$\Psi$}}}_0\right)$.
\begin{eqnarray*}
&&\textrm{det}\left( \textrm{Cov}\left(\hat{\mbox{\boldmath{$\Psi$}}}_0\right)-\lambda\,\mathbb{I}_{J-1} \right)\\
&&\quad=\,\,\, \textrm{det} \left(\frac{\sigma^2(Ku+1)}{K\left(N-(J-1)\,n\right)}\mathds{1}_{J-1}\,\mathds{1}_{J-1}^\top+\left(\frac{\sigma^2(K(v+u)+1)}{K\,n}-\lambda\right)\,\mathbb{I}_{J-1}\right)\\
&&\quad=\,\,\, \left(\frac{\sigma^2}{K}\left(\frac{(Ku+1)(J-1)}{N-(J-1)\,n}+\frac{K(v+u)+1}{n}\right)-\lambda\right)\left(\frac{\sigma^2(K(v+u)+1)}{K\,n}-\lambda\right)^{J-2}.
\end{eqnarray*}
Then
\begin{equation*}
\lambda_1=\frac{\sigma^2}{K}\left(\frac{(Ku+1)(J-1)}{\left(N-(J-1)\,n\right)}+\frac{K(v+u)+1}{n}\right)
\end{equation*}
and
\begin{equation*}
\lambda_2=\frac{\sigma^2(K(v+u)+1)}{K\,n}
\end{equation*}
are the solutions of (\ref{eig}).
\end{proof}

For the estimation of the population treatment effects the \textit{D}-criterion is defined as the logarithm of the determinant of the covariance matrix of the BLUE $\hat{\mbox{\boldmath{$\Psi$}}}_0$:
\begin{equation*}
\mathrm{D}_{\Psi_0}(\xi) = \textrm{ln}\,\textrm{det}\left( \textrm{Cov}\left(\hat{\mbox{\boldmath{$\Psi$}}}_0\right) \right).
\end{equation*}
for an exact design. We compute the determinant as the product of the eigenvalues, which are given in Lemma~\ref{l1}, and receive using $n=Nw$ the following result for approximate designs.

\begin{thm}
The \textit{D}-criterion for the estimation of the population treatment effects $\mbox{\boldmath{$\Psi$}}_0$ is given for an approximate design by
\begin{equation}\label{dcrp}
\mathrm{D}_{\Psi_0}(w) = c+\mathrm{ln}\left(\frac{K(v+u)+1}{w}+\frac{(J-1)(Ku+1)}{1-(J-1)w}\right)+(J-2)\,\mathrm{ln}\left(\frac{K(v+u)+1}{w}\right),
\end{equation}
where $c=(J-1)\,\mathrm{ln}\,\left(\frac{\sigma^2}{K\,N}\right)$.
\end{thm}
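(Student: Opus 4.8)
The plan is to combine the definition of the \textit{D}-criterion with the eigenvalue decomposition already supplied by Lemma~\ref{l1}. Since the determinant of a matrix equals the product of its eigenvalues counted with multiplicity, I would first write $\det\bigl(\mathrm{Cov}(\hat{\mbox{\boldmath{$\Psi$}}}_0)\bigr) = \lambda_1\,\lambda_2^{\,J-2}$, using that $\lambda_1$ occurs with algebraic multiplicity $1$ and $\lambda_2$ with multiplicity $J-2$. Taking the logarithm turns the product into the sum $\mathrm{D}_{\Psi_0}(\xi) = \ln\lambda_1 + (J-2)\,\ln\lambda_2$, which already reproduces the two-term structure of the claimed formula.

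Next I would pass from the exact to the approximate design via the substitution $n=Nw$, so that the control-group size becomes $N-(J-1)n = N\bigl(1-(J-1)w\bigr)$. Inserting these into the expressions from Lemma~\ref{l1} and factoring the common scalar $\frac{\sigma^2}{KN}$ out of each eigenvalue gives $\lambda_1 = \frac{\sigma^2}{KN}\bigl(\frac{K(v+u)+1}{w} + \frac{(J-1)(Ku+1)}{1-(J-1)w}\bigr)$ and $\lambda_2 = \frac{\sigma^2}{KN}\cdot\frac{K(v+u)+1}{w}$.

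Substituting these back into $\ln\lambda_1 + (J-2)\,\ln\lambda_2$ and applying $\ln(ab)=\ln a + \ln b$, the term $\ln\bigl(\frac{\sigma^2}{KN}\bigr)$ is contributed once by $\lambda_1$ and $(J-2)$ times by $\lambda_2$, for a total coefficient of $1+(J-2)=J-1$. Collecting these constant contributions into $c=(J-1)\,\ln\bigl(\frac{\sigma^2}{KN}\bigr)$ and leaving the remaining bracketed logarithms untouched yields exactly \eqref{dcrp}.

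This argument is essentially a bookkeeping exercise once Lemma~\ref{l1} is in hand, so I do not expect a serious obstacle. The only point requiring minor care is the correct separation of the constant $c$ from the $w$-dependent logarithms, that is, verifying that the $\frac{\sigma^2}{KN}$ prefactor enters with total multiplicity $J-1$ rather than $J$; this is precisely where the multiplicities $1$ and $J-2$ from Lemma~\ref{l1} must be tracked accurately.
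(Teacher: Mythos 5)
Your proposal is correct and follows exactly the paper's own route: the paper likewise computes $\det\bigl(\mathrm{Cov}(\hat{\mbox{\boldmath{$\Psi$}}}_0)\bigr)$ as the product $\lambda_1\,\lambda_2^{\,J-2}$ of the eigenvalues from Lemma~\ref{l1}, substitutes $n=Nw$, and collects the $(J-1)$ copies of $\ln\bigl(\tfrac{\sigma^2}{KN}\bigr)$ into the constant $c$. Your bookkeeping of the multiplicities $1$ and $J-2$ is accurate, so nothing is missing.
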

\begin{thm}
The \textit{D}-optimal weight $w^*_{D,\Psi_0}$ for the estimation of the population treatment effects is given by
\begin{equation}\label{wdp}
w^*_{D,\Psi_0} = \frac{J-2+z}{(J-1)\left(J+z\right)},
\end{equation}
where $z=\sqrt{\frac{4(J-1)Kv}{Ku+1}+J^2}$.
\end{thm}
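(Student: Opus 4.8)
The plan is to obtain \eqref{wdp} as the minimiser of the smooth one-variable objective \eqref{dcrp} on the open interval $w\in(0,\tfrac{1}{J-1})$ via the first-order condition. Writing $p:=K(v+u)+1$ and $q:=Ku+1$, so that $p-q=Kv$, and dropping the additive constant $c$, the criterion is $\ln\!\big(\tfrac{p}{w}+\tfrac{(J-1)q}{1-(J-1)w}\big)+(J-2)\ln\!\big(\tfrac{p}{w}\big)$. Denoting by $g(w)=\tfrac{p}{w}+\tfrac{(J-1)q}{1-(J-1)w}$ the inner bracket of the first logarithm, the second term $(J-2)\ln(p/w)$ contributes the clean derivative $-(J-2)/w$, so setting $\tfrac{\mathrm{d}}{\mathrm{d}w}\mathrm{D}_{\Psi_0}(w)=0$ yields the stationarity condition $w\,g'(w)=(J-2)\,g(w)$.

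First I would clear the denominators $w$ and $(1-(J-1)w)^2$ in this equation and collect powers of $w$. The coefficient of $w^2$ telescopes to $(J-1)^2(q-p)=-(J-1)^2Kv$, so after normalising the sign the condition becomes the quadratic
\[
(J-1)^2Kv\,w^2+\big[(J-2)q-2(J-1)p\big]\,w+p=0 .
\]
The crux of the computation is the discriminant $\big[(J-2)q-2(J-1)p\big]^2-4(J-1)^2Kv\,p$. Substituting $Kv=p-q$ makes the pure $p^2$ terms cancel, leaving $q\big[(J-2)^2q+4(J-1)p\big]$; invoking the identity $(J-2)^2+4(J-1)=J^2$ this equals $q^2\big(J^2+\tfrac{4(J-1)Kv}{q}\big)=q^2z^2$, with $z$ exactly as in the statement, so that $\sqrt{\text{discriminant}}=qz$.

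With $-B=Jq+2(J-1)Kv$ (again using $p=q+Kv$), the two roots are $w=\tfrac{Jq+2(J-1)Kv\pm qz}{2(J-1)^2Kv}$. To reach the closed form I would substitute $Kv=\tfrac{q(z^2-J^2)}{4(J-1)}$, giving the common denominator $\tfrac{(J-1)q}{2}(z-J)(z+J)$, while the numerator of the root carrying $-qz$ factors as $\tfrac{q}{2}(z-J)(z+J-2)$. The factor $(z-J)$ cancels and the root collapses to $\tfrac{z+J-2}{(J-1)(z+J)}$, which is \eqref{wdp}. The other root, carrying $+qz$, simplifies analogously to $\tfrac{z-J+2}{(J-1)(z-J)}$, and since $z>J$ this exceeds $\tfrac{1}{J-1}$ and is therefore discarded.

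The remaining, and main, obstacle is to confirm that the selected root is genuinely the minimiser and not merely a critical point. Because $z>J\ge 2$, the value $\tfrac{z+J-2}{(J-1)(z+J)}$ lies strictly between $0$ and $\tfrac{1}{J-1}$, so it is admissible. Since $\mathrm{D}_{\Psi_0}(w)\to+\infty$ as $w\to0^+$ (through the $\ln(p/w)$ contributions) and as $w\to(\tfrac{1}{J-1})^-$ (through $\tfrac{(J-1)q}{1-(J-1)w}\to\infty$), the continuous objective attains an interior minimum; as this admissible root is the only stationary point in $(0,\tfrac{1}{J-1})$, it must be the global minimiser, which establishes \eqref{wdp}.
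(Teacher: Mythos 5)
Your proposal is correct and follows exactly the route the paper leaves implicit: the paper states the optimal weight without proof, as the minimiser of the approximate-design criterion \eqref{dcrp}, and your first-order condition, the resulting quadratic $(J-1)^2Kv\,w^2+[(J-2)q-2(J-1)p]w+p=0$, the discriminant simplification to $q^2z^2$, and the root selection all check out. The closing argument (objective tends to $+\infty$ at both ends of $\bigl(0,\tfrac{1}{J-1}\bigr)$, unique admissible stationary point) properly establishes that the selected root is the global minimiser, so this is a complete version of the calculation the paper omits.
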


The \textit{E}-criterion for the estimation of the population treatment effects is defined for an exact design as the largest eigenvalue of the covariance matrix of the BLUE $\hat{\mbox{\boldmath{$\Psi$}}}_0$
\begin{equation*}
\mathrm{E}_{\Psi_0}(\xi) = \lambda_{max}\left( \textrm{Cov}\left(\hat{\mbox{\boldmath{$\Psi$}}}_0\right) \right),
\end{equation*}
where $\lambda_{max}(A)$ denotes the largest eigenvalue of the matrix $A$.

Using Lemma~\ref{l1} we receive the following form of the \textit{E}-criterion for approximate designs.
\begin{thm}
The \textit{E}-criterion for the estimation of the population treatment effects $\mbox{\boldmath{$\Psi$}}_0$ is given for an approximate design by
\begin{equation}\label{ecrp}
\mathrm{E}_{\Psi_0}(w) =\frac{\sigma^2}{K\,N}\left(\frac{K(v+u)+1}{w}+\frac{(J-1)(Ku+1)}{1-(J-1)w}\right).
\end{equation}
\end{thm}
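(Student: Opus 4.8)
The plan is to read the E-criterion straight off the spectral decomposition already furnished by Lemma~\ref{l1}. By definition $\mathrm{E}_{\Psi_0}(\xi)$ is the largest eigenvalue of $\mathrm{Cov}(\hat{\mbox{\boldmath{$\Psi$}}}_0)$, and Lemma~\ref{l1} tells us this matrix has only the two distinct eigenvalues $\lambda_1$ (algebraic multiplicity $1$) and $\lambda_2$ (algebraic multiplicity $J-2$). So the entire argument reduces to deciding which of $\lambda_1,\lambda_2$ is the larger, and then rewriting the winner in terms of the weight $w$.

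First I would form the difference $\lambda_1-\lambda_2$ using the expressions from Lemma~\ref{l1}. The common summand $\frac{\sigma^2(K(v+u)+1)}{Kn}$ cancels, leaving
\begin{equation*}
\lambda_1-\lambda_2=\frac{\sigma^2(Ku+1)(J-1)}{K\left(N-(J-1)\,n\right)}.
\end{equation*}
Every factor here is strictly positive: $\sigma^2>0$, $Ku+1>0$, $J-1\ge 1$, and the denominator equals $Km=K\left(N-(J-1)n\right)>0$ precisely because the control group is non-empty ($m>0$). Hence $\lambda_1>\lambda_2$, so the maximal eigenvalue is $\lambda_1$ and $\mathrm{E}_{\Psi_0}(\xi)=\lambda_1$. (In the boundary case $J=2$ the eigenvalue $\lambda_2$ is simply absent, since its multiplicity $J-2$ vanishes, and $\lambda_1$ is the unique eigenvalue; the conclusion is unchanged.)

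The last step is a change of variables to the approximate design. Substituting $n=Nw$ and $N-(J-1)n=N\left(1-(J-1)w\right)$ into $\lambda_1$ and pulling out the factor $\frac{\sigma^2}{KN}$ gives exactly \eqref{ecrp}. There is no real obstacle in this proof; the only point that requires care is the sign of $\lambda_1-\lambda_2$, and that sign rests entirely on the standing assumption $m>0$ that the control group is non-degenerate. Once that is observed, both the identification of the largest eigenvalue and the subsequent substitution are routine.
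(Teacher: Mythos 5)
Your proposal is correct and follows exactly the paper's route: the paper likewise derives \eqref{ecrp} directly from Lemma~\ref{l1} by taking the largest eigenvalue and substituting $n=Nw$, $m=N(1-(J-1)w)$. Your explicit verification that $\lambda_1-\lambda_2=\frac{\sigma^2(Ku+1)(J-1)}{Km}>0$ (and the remark on the $J=2$ case) only makes explicit a step the paper leaves implicit.
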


\begin{thm}
The \textit{E}-optimal weight $w^*_{E,\Psi_0}$ for the estimation of the population treatment effects is given by
\begin{equation}\label{wep}
w^*_{E,\Psi_0} = \frac{1}{(J-1)\left(1+\sqrt{\frac{Ku+1}{K(v+u)+1}}\right)}.
\end{equation}
\end{thm}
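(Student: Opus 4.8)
The plan is to obtain $w^*_{E,\Psi_0}$ by minimising the E-criterion \eqref{ecrp} directly over the admissible weights $w\in(0,1/(J-1))$, since the E-optimal design is by definition the one that renders the largest eigenvalue of $\mathrm{Cov}(\hat{\mbox{\boldmath{$\Psi$}}}_0)$ as small as possible, and \eqref{ecrp} already expresses that largest eigenvalue as an explicit function of $w$. To keep the algebra transparent I would abbreviate $a:=K(v+u)+1$ and $b:=Ku+1$, both strictly positive, and write $\mathrm{E}_{\Psi_0}(w)=\frac{\sigma^2}{KN}\,g(w)$ with
\begin{equation*}
g(w)=\frac{a}{w}+\frac{(J-1)\,b}{1-(J-1)w}.
\end{equation*}
The positive prefactor $\sigma^2/(KN)$ does not affect the location of the minimiser, so it suffices to minimise $g$.

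Next I would differentiate, obtaining
\begin{equation*}
g'(w)=-\frac{a}{w^2}+\frac{(J-1)^2\,b}{\bigl(1-(J-1)w\bigr)^2},
\end{equation*}
and set $g'(w)=0$. This is equivalent to $(J-1)^2 b\,w^2=a\,\bigl(1-(J-1)w\bigr)^2$. On the open interval $(0,1/(J-1))$ both $w$ and $1-(J-1)w$ are strictly positive, so I may take positive square roots of both sides to reach the linear equation $(J-1)\sqrt{b}\,w=\sqrt{a}\,\bigl(1-(J-1)w\bigr)$. Collecting the terms that contain $w$ gives $(J-1)w(\sqrt{a}+\sqrt{b})=\sqrt{a}$, hence $w=\sqrt{a}/[(J-1)(\sqrt{a}+\sqrt{b})]=1/[(J-1)(1+\sqrt{b/a})]$, which is exactly \eqref{wep} once $a$ and $b$ are re-substituted.

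Finally I would confirm that this stationary point is the global minimiser rather than a maximum or a boundary degeneracy. The cleanest route is convexity: on $(0,1/(J-1))$ the maps $w\mapsto a/w$ and $w\mapsto (J-1)b/(1-(J-1)w)$ are both strictly convex, so $g$ is strictly convex and its unique interior critical point is automatically the global minimum; alternatively one observes that $g(w)\to+\infty$ as $w\downarrow 0$ and as $w\uparrow 1/(J-1)$, which forces the interior stationary point to be the minimiser. I do not expect a genuine obstacle here, as the computation is elementary calculus of one variable. The only points demanding a little care are the sign choice when extracting the square root—justified by the positivity of both factors on the admissible interval—and the check that the resulting weight actually lies in $(0,1/(J-1))$, which is immediate because $\sqrt{a}/(\sqrt{a}+\sqrt{b})\in(0,1)$.
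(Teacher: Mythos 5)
Your proposal is correct and follows exactly the route the paper intends: the paper states this theorem immediately after deriving the E-criterion \eqref{ecrp} and leaves the minimisation implicit, which is precisely the one-variable calculus you carry out (critical point via $g'(w)=0$, positive square root justified on $\bigl(0,\tfrac{1}{J-1}\bigr)$, strict convexity to confirm the global minimum). Your added checks on convexity and admissibility of the resulting weight are sound and fill in the details the paper omits.
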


Note that also for the \textit{D}- and \textit{E}-criteria the optimal weights for the estimation of the population parameters (\eqref{wdp} and \eqref{wep}) tend to the optimal weights in the fixed effects model: $w^*_{D,\Psi_0} \rightarrow \frac{1}{J}$ and $w^*_{E,\Psi_0} \rightarrow \frac{1}{2(J-1)}$, for $u\rightarrow\infty$. For large values of the treatment effects variance ($v\rightarrow\infty$) all observations should to be taken in the treatment groups: $w^*_{D,\Psi_0} =w^*_{E,\Psi_0} = \frac{1}{J-1}$. If both variances are large and $b=v/u$, the limiting values for the optimal weights are
\begin{equation*}
w^*_{D,\Psi_0} \rightarrow \frac{J-2+\sqrt{4(J-1)b+J^2}}{(J-1)\left(J+\sqrt{4(J-1)b+J^2}\right)}
\end{equation*} 
and 
\begin{equation*}
w^*_{E,\Psi_0} \rightarrow \frac{1}{(J-1)\left(1+\sqrt{\frac{1}{1+b}}\right)}.
\end{equation*}

For the prediction of the individual treatment effects we consider the particular multiple group model with one treatment group and one control group ($J=2$). In this case the mean squared error matrix (\ref{mse}) of the prediction simplifies to
\begin{equation}\label{mse2}
\mathrm{Cov}\left(\hat{\mbox{\boldmath{$\Psi$}}}-\mbox{\boldmath{$\Psi$}}\right)= \left(\begin{array}{cc} \mathbf{H}_{11} & \mathbf{H}_{12} \\ 
\mathbf{H}_{12}^\top & \mathbf{H}_{22} \end{array}\right),
\end{equation}
where
\begin{equation*}
\mathbf{H}_{11}=\sigma^2(Ku+1)\left(\frac{N}{Kn\,m}\mathds{1}_{n}\mathds{1}_{n}^\top+\frac{v}{K(u+v)+1}\left(\mathbb{I}_{n}-\frac{1}{n}\mathds{1}_{n}\mathds{1}_{n}^\top\right)\right),
\end{equation*}
\begin{equation*}
\mathbf{H}_{12}=\sigma^2(Ku+1)\frac{N}{Kn\,m}\mathds{1}_{n}\mathds{1}_{m}^\top,
\end{equation*}
\begin{equation*}
\mathbf{H}_{22}=\sigma^2\left(\left(\frac{K(u+v)+1}{Kn}+\frac{Ku+1}{Km}\right)\mathds{1}_{m}\mathds{1}_{m}^\top+v\,\mathbb{I}_{m}\right).
\end{equation*}
The approximate design (\ref{adesign}) simplifies to
\begin{equation*}
\xi = \left(\begin{array}{cc}1 & 2 \\ w & 1-w \end{array}\right).
\end{equation*}
The next Lemma provides the eigenvalues of the mean squared error matrix \eqref{mse2}.
\begin{lem}\label{l2}
The eigenvalues of the mean squared error matrix of the BLUP $\hat{\mbox{\boldmath{$\Psi$}}}$ are
\begin{equation*}
\lambda_1=\frac{\sigma^2\,v\,(Ku+1)}{K(v+u)+1},
\end{equation*}
\begin{equation*}
\lambda_2=\sigma^2v,
\end{equation*}
\begin{equation*}
\lambda_3=\frac{\sigma^2}{2}\left(\frac{v}{w}+\frac{Ku+1}{Kw(1-w)}+\frac{\sqrt{s_w}}{Kw(1-w)}\right),
\end{equation*}
where $s_w=K^2(1-w)^2v^2+2K(1-w)(1-2w)(Ku+1)v+(Ku+1)^2$, and
\begin{equation*}
\lambda_4=\frac{\sigma^2}{2}\left(\frac{v}{w}+\frac{Ku+1}{Kw(1-w)}-\frac{\sqrt{s_w}}{Kw(1-w)}\right)
\end{equation*}
with algebraic multiplicities $n-1$, $m-1$, $1$ and $1$, respectively.
\end{lem}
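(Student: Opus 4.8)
The plan is to exploit the fact that every block of the mean squared error matrix in \eqref{mse2} is assembled only from the rank-one matrices $\mathds{1}_n\mathds{1}_n^\top$, $\mathds{1}_n\mathds{1}_m^\top$, $\mathds{1}_m\mathds{1}_m^\top$ and the identities $\mathbb{I}_n$, $\mathbb{I}_m$. I would therefore decompose $\mathbb{R}^{n+m}$ into the three mutually orthogonal subspaces $V_1=\{(\mathbf{x}^\top,\mathbf{0}^\top)^\top:\mathds{1}_n^\top\mathbf{x}=0\}$, $V_2=\{(\mathbf{0}^\top,\mathbf{y}^\top)^\top:\mathds{1}_m^\top\mathbf{y}=0\}$ and $V_3=\mathrm{span}\{(\mathds{1}_n^\top,\mathbf{0}^\top)^\top,(\mathbf{0}^\top,\mathds{1}_m^\top)^\top\}$, of dimensions $n-1$, $m-1$ and $2$, which sum to $n+m$ and hence span the whole space. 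Since the matrix is symmetric, it suffices to show each $V_i$ is invariant and to identify the spectrum on each piece.

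First I would treat $V_1$ and $V_2$. For $(\mathbf{x}^\top,\mathbf{0}^\top)^\top\in V_1$ the lower block $\mathbf{H}_{12}^\top\mathbf{x}$ is proportional to $\mathds{1}_m(\mathds{1}_n^\top\mathbf{x})=\mathbf{0}$, while in $\mathbf{H}_{11}\mathbf{x}$ both the $\mathds{1}_n\mathds{1}_n^\top$ term and the $\tfrac1n\mathds{1}_n\mathds{1}_n^\top$ part of the projector drop out, leaving $\mathbf{H}_{11}\mathbf{x}=\tfrac{\sigma^2 v(Ku+1)}{K(v+u)+1}\mathbf{x}$; this yields $\lambda_1$ with multiplicity $n-1$. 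Symmetrically, for $(\mathbf{0}^\top,\mathbf{y}^\top)^\top\in V_2$ the upper block $\mathbf{H}_{12}\mathbf{y}$ vanishes and $\mathbf{H}_{22}\mathbf{y}=\sigma^2 v\,\mathbf{y}$, giving $\lambda_2$ with multiplicity $m-1$. Both computations are immediate once one uses that $\mathds{1}^\top$ applied to a vector orthogonal to $\mathds{1}$ is zero.

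The remaining two eigenvalues live on $V_3$. Using $\mathds{1}_n^\top\mathds{1}_n=n$ and $\mathds{1}_m^\top\mathds{1}_m=m$ I would compute the images of the two spanning vectors, confirming that $V_3$ is invariant, and read off the $2\times 2$ matrix $\tilde{\mathbf{M}}$ representing the restriction in the basis $\{(\mathds{1}_n^\top,\mathbf{0}^\top)^\top,(\mathbf{0}^\top,\mathds{1}_m^\top)^\top\}$ (orthogonal but not normalized, which is harmless for the eigenvalues). Substituting $n=Nw$, $m=N(1-w)$, I would show after simplification that $\mathrm{tr}\,\tilde{\mathbf{M}}=\sigma^2\big(\tfrac{v}{w}+\tfrac{Ku+1}{Kw(1-w)}\big)$ and $\det\tilde{\mathbf{M}}=\tfrac{\sigma^4 v(Ku+1)}{Kw(1-w)}$. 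The eigenvalues are then the roots $\tfrac12\big(\mathrm{tr}\pm\sqrt{\mathrm{tr}^2-4\det}\big)$, and the claimed $\lambda_3,\lambda_4$ follow once the discriminant is shown to equal $\sigma^4 s_w/(K^2w^2(1-w)^2)$.

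The main obstacle is exactly this last algebraic verification. The cleanest route I see is to abbreviate $p:=Ku+1$ and $q:=K(v+u)+1=p+Kv$, so that $\mathrm{tr}^2-4\det$ reduces to $(Kv(1-w)+p)^2-4Kpv\,w(1-w)$; expanding and collecting the cross term as $2Kpv(1-w)(1-2w)$ leaves precisely $s_w=K^2(1-w)^2v^2+2K(1-w)(1-2w)(Ku+1)v+(Ku+1)^2$, with the $p^2$ and $K^2v^2(1-w)^2$ terms appearing directly. The trace and determinant identities themselves collapse via the elementary relation $w+(1-w)^2+w(1-w)=1$, which is what makes the $p$-dependent parts of the trace telescope to $p$ and forces the determinant bracket down to $\tfrac{v}{w}$.
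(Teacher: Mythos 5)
Your proposal is correct, but it follows a genuinely different route from the paper's proof. The paper computes the characteristic polynomial head-on: it writes $\mathrm{Cov}\left(\hat{\mbox{\boldmath{$\Psi$}}}-\mbox{\boldmath{$\Psi$}}\right)-\lambda\,\mathbb{I}_{N}$ in block form and factors its determinant via the Schur complement, $\mathrm{det}\left(\tilde{\mathbf{H}}_{11}\right)\mathrm{det}\left(\tilde{\mathbf{H}}_{22}-\mathbf{H}_{12}^\top\tilde{\mathbf{H}}_{11}^{-1}\mathbf{H}_{12}\right)$, which requires computing $\tilde{\mathbf{H}}_{11}^{-1}$ and produces the factored polynomial $(a_2-\lambda)^{n-1}\left(\sigma^2v-\lambda\right)^{m-1}\left((a_3+\sigma^2v-\lambda)(a_1-\lambda)-a_1^2m/n\right)$, whose roots are read off; the substitution $n=Nw$, $m=N(1-w)$ is made only at the very end. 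You instead decompose $\mathbb{R}^{n+m}$ into three orthogonal invariant subspaces and identify the eigenvectors explicitly: within-group contrast vectors yield $\lambda_1$ and $\lambda_2$ with multiplicities $n-1$ and $m-1$, and the span of the two group indicator vectors carries $\lambda_3,\lambda_4$ via the trace and determinant of the $2\times 2$ restriction. I verified your three key identities: $\mathrm{tr}\,\tilde{\mathbf{M}}=\sigma^2\left(\tfrac{v}{w}+\tfrac{Ku+1}{Kw(1-w)}\right)$ (using $w+(1-w)^2+w(1-w)=1$), $\mathrm{det}\,\tilde{\mathbf{M}}=\tfrac{\sigma^4v(Ku+1)}{Kw(1-w)}$, and $\mathrm{tr}^2-4\,\mathrm{det}=\sigma^4s_w/\left(K^2w^2(1-w)^2\right)$, the last because the cross terms combine as $2Kpv(1-w)-4Kpvw(1-w)=2Kpv(1-w)(1-2w)$ with $p=Ku+1$; all are correct, and the multiplicities $(n-1)+(m-1)+1+1=N$ account for the full spectrum of a symmetric matrix. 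What your approach buys: no matrix inversion and no Schur complement, hence no implicit requirement that $\tilde{\mathbf{H}}_{11}$ be nonsingular at the candidate $\lambda$ (a genericity point the paper's determinant factorization glosses over), plus a transparent interpretation of the eigenspaces as within-group contrasts versus group means. What the paper's approach buys: it is mechanical and does not require guessing invariant subspaces in advance, so it transfers more directly to block structures whose eigenvectors are not evident a priori.
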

For the proof see Appendix~\ref{A2}.

We define the D-criterion for the prediction as the logarithm of the determinant of the mean squared error matrix of $\hat{\mbox{\boldmath{$\Psi$}}}$:
\begin{equation*}
\mathrm{D}_{\Psi}(\xi) = \mathrm{ln}\,\mathrm{det}\left( \textrm{Cov}\left(\hat{\mbox{\boldmath{$\Psi$}}}-\mbox{\boldmath{$\Psi$}}\right) \right).
\end{equation*}
We compute the determinant using the results of Lemma~\ref{l2} and obtain the following criterion for approximate designs.
\begin{thm}\label{t8}
The \textit{D}-criterion for the prediction of the individual treatment effects $\mbox{\boldmath{$\Psi$}}_i$ is given for an approximate design by
\begin{equation}\label{dcri}
\mathrm{D}_{\Psi}(w) = d+N\,w\,\mathrm{ln}\left(\frac{Ku+1}{K(u+v)+1}\right)-\mathrm{ln}\left(w\,(1-w)\right),
\end{equation}
where $d=\mathrm{ln}\left((\sigma^2)^Nv^{N-1}\,(K(u+v)+1)\frac{1}{K}\right)$.
\end{thm}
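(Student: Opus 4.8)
The plan is to start from the definition $\mathrm{D}_{\Psi}(\xi)=\ln\det\!\left(\mathrm{Cov}\left(\hat{\mbox{\boldmath{$\Psi$}}}-\mbox{\boldmath{$\Psi$}}\right)\right)$ and to exploit the fact that the determinant of a symmetric matrix is the product of its eigenvalues counted with algebraic multiplicity. Since Lemma~\ref{l2} already supplies all four eigenvalues $\lambda_1,\dots,\lambda_4$ with multiplicities $n-1$, $m-1$, $1$ and $1$, I would immediately write
\[\det\left(\mathrm{Cov}\left(\hat{\mbox{\boldmath{$\Psi$}}}-\mbox{\boldmath{$\Psi$}}\right)\right)=\lambda_1^{\,n-1}\lambda_2^{\,m-1}\lambda_3\lambda_4,\]
so that $\mathrm{D}_{\Psi}(w)=(n-1)\ln\lambda_1+(m-1)\ln\lambda_2+\ln(\lambda_3\lambda_4)$. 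The logarithms of $\lambda_1$ and $\lambda_2$ are read off directly from Lemma~\ref{l2}, so the only genuine work lies in the product $\lambda_3\lambda_4$.

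The key step, and the one I expect to be the main obstacle, is the simplification of $\lambda_3\lambda_4$. Because $\lambda_3$ and $\lambda_4$ differ only in the sign in front of $\sqrt{s_w}/(Kw(1-w))$, their product is a difference of squares,
\[\lambda_3\lambda_4=\frac{(\sigma^2)^2}{4}\left(\left(\frac{v}{w}+\frac{Ku+1}{Kw(1-w)}\right)^2-\frac{s_w}{(Kw(1-w))^2}\right),\]
so the radical disappears and the expression collapses to a rational function of $w$. Putting the first bracketed term over the common denominator $Kw(1-w)$, its numerator becomes $\bigl(Kv(1-w)+Ku+1\bigr)^2$; subtracting $s_w$ I would check that the pure $v^2$ and constant terms cancel and the surviving cross terms combine to $4Kvw(1-w)(Ku+1)$ — this is exactly where the particular form of $s_w$ from Lemma~\ref{l2} is needed. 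Dividing by $(Kw(1-w))^2$ then yields the clean identity
\[\lambda_3\lambda_4=\frac{(\sigma^2)^2\,v\,(Ku+1)}{Kw(1-w)}.\]

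With this identity in hand the remainder is bookkeeping. Substituting $n=Nw$ and $m=N(1-w)$ and expanding each logarithm, I would collect the coefficient of each factor: $\sigma^2$ accumulates total multiplicity $(n-1)+(m-1)+2=N$, the factor $v$ accumulates $(n-1)+(m-1)+1=N-1$, and $Ku+1$ accumulates $(n-1)+1=Nw$. The factor $K(u+v)+1$ enters only through $\lambda_1$, where its coefficient $-(n-1)=1-Nw$ splits into a single $\ln(K(u+v)+1)$ and a term $-Nw\ln(K(u+v)+1)$, while $\lambda_3\lambda_4$ alone contributes the remaining $-\ln K$ and $-\ln(w(1-w))$. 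Gathering the $w$-independent pieces into $d=\ln\bigl((\sigma^2)^N v^{N-1}(K(u+v)+1)\tfrac1K\bigr)$ and combining $Nw\ln(Ku+1)$ with $-Nw\ln(K(u+v)+1)$ into $Nw\ln\tfrac{Ku+1}{K(u+v)+1}$ then reproduces exactly formula~\eqref{dcri}.
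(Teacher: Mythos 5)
Your proposal is correct and follows the same route as the paper: the paper likewise computes $\det\left(\mathrm{Cov}\left(\hat{\mbox{\boldmath{$\Psi$}}}-\mbox{\boldmath{$\Psi$}}\right)\right)$ as the product of the eigenvalues from Lemma~\ref{l2}, $\lambda_1^{\,n-1}\lambda_2^{\,m-1}\lambda_3\lambda_4$, with $n=Nw$, $m=N(1-w)$. Your explicit difference-of-squares simplification $\lambda_3\lambda_4=\frac{(\sigma^2)^2\,v\,(Ku+1)}{Kw(1-w)}$ and the subsequent exponent bookkeeping are exactly the calculation the paper leaves implicit, and both check out.
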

\begin{thm}
The \textit{D}-optimal weight $w^*_{D,\Psi}$ for the prediction of the individual treatment effects is given by
\begin{equation}\label{wdi}
w^*_{D,\Psi} = \frac{1}{N\,t}+\frac{1}{2}+\sqrt{\frac{1}{N^2\,t^2}+\frac{1}{4}} ,
\end{equation}
where $t=\mathrm{ln}\left(\frac{Ku+1}{K(u+v)+1}\right)$.
\end{thm}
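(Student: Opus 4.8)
The plan is to treat $w \mapsto \mathrm{D}_{\Psi}(w)$ from Theorem~\ref{t8} as a smooth function on the open interval $(0,1)$ and to locate its minimizer by a standard stationarity argument. Writing $t=\mathrm{ln}\left(\frac{Ku+1}{K(u+v)+1}\right)$, I first note that $t<0$, because $v>0$ forces $Ku+1<K(u+v)+1$; keeping track of this sign is the crux of the whole computation. Discarding the additive constant $d$, the criterion reads $\mathrm{D}_{\Psi}(w)=d+N\,t\,w-\mathrm{ln}\,w-\mathrm{ln}(1-w)$.

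Before differentiating I would record convexity: since $\frac{d^2}{dw^2}\mathrm{D}_{\Psi}(w)=\frac{1}{w^2}+\frac{1}{(1-w)^2}>0$ on $(0,1)$, the function is strictly convex, so any interior stationary point is automatically the unique global minimizer. This removes the need for a separate second-order check and justifies that the formula \eqref{wdi} is the genuine optimum once it is shown to be the admissible stationary point.

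Next I would set the first derivative to zero, $\frac{d}{dw}\mathrm{D}_{\Psi}(w)=N\,t-\frac{1}{w}+\frac{1}{1-w}=0$. Clearing denominators by multiplying through by $w(1-w)$ turns this into the quadratic equation $f(w)=0$ with $f(w)=N\,t\,w^2-(N\,t+2)\,w+1$. The discriminant collapses neatly, $(N\,t+2)^2-4N\,t=N^2t^2+4$, so the two roots are $w=\frac{N\,t+2\pm\sqrt{N^2t^2+4}}{2N\,t}$.

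The main obstacle is selecting the correct root and matching signs, and this is exactly where $t<0$ enters. Since the product of the roots equals $\frac{1}{N\,t}<0$, the two roots have opposite signs, and the endpoint values $f(0)=1>0$ and $f(1)=-1<0$ show that the admissible root lies in $(0,1)$ and is the positive one. Because $2N\,t<0$, this admissible root is the one carrying the minus sign in the numerator, namely $w=\frac{1}{2}+\frac{1}{N\,t}-\frac{\sqrt{N^2t^2+4}}{2N\,t}$. Finally, pulling $N\,t$ inside the radical and using $N\,t<0$ to flip the sign gives $-\frac{\sqrt{N^2t^2+4}}{2N\,t}=\sqrt{\frac{1}{N^2t^2}+\frac{1}{4}}$, which reproduces \eqref{wdi}. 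I expect the only delicate point to be this sign flip, which converts the minus appearing in the quadratic formula into the plus of the stated expression.
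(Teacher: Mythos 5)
Your proposal is correct, and it follows what is essentially the paper's (implicit) route: the paper states this theorem without a separate proof, the optimal weight being obtained by minimizing the approximate-design criterion \eqref{dcri} of Theorem~\ref{t8} exactly as you do — setting the derivative to zero, solving the resulting quadratic, and selecting the admissible root. Your additional observations (strict convexity on $(0,1)$, the sign analysis forced by $t<0$, and the sign flip that turns the quadratic-formula minus into the plus in \eqref{wdi}) are accurate and supply precisely the details the paper leaves to the reader.
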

We define the \textit{E}-criterion for the prediction as the largest eigenvalue of the mean squared error matrix:
\begin{equation*}
\mathrm{E}_{\Psi}(\xi) = \lambda_{max}\left( \textrm{Cov}\left(\hat{\mbox{\boldmath{$\Psi$}}}-\mbox{\boldmath{$\Psi$}}\right) \right).
\end{equation*}
The E-criterion for approximate designs follows directly from Lemma~\ref{l2}.
\begin{thm}\label{t9}
The \textit{E}-criterion for the prediction of the individual treatment effects $\mbox{\boldmath{$\Psi$}}_i$ is given for an approximate design by
\begin{equation}\label{ecri}
\mathrm{E}_{\Psi}(w) = \frac{\sigma^2}{2}\left(\frac{v}{w}+\frac{Ku+1}{Kw(1-w)}+\frac{\sqrt{s_w}}{Kw(1-w)}\right).
\end{equation}
\end{thm}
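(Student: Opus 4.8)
The plan is to use the defining identity $\mathrm{E}_{\Psi}(\xi)=\lambda_{max}\left(\mathrm{Cov}(\hat{\mbox{\boldmath{$\Psi$}}}-\mbox{\boldmath{$\Psi$}})\right)$ together with Lemma~\ref{l2}, which already lists the four distinct eigenvalues $\lambda_1,\lambda_2,\lambda_3,\lambda_4$ of the mean squared error matrix expressed in terms of the weight $w$. Since the right-hand side of \eqref{ecri} is exactly $\lambda_3$, the whole task reduces to showing that $\lambda_3$ dominates the other three eigenvalues for every $w\in(0,1)$; the algebraic multiplicities play no role here.

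First I would dispose of the easy comparisons. The inequality $\lambda_3\ge\lambda_4$ is immediate, since the two expressions differ only by the nonnegative term $\sqrt{s_w}/(Kw(1-w))\ge 0$. Likewise $\lambda_2=\sigma^2 v\ge\lambda_1=\sigma^2 v\,(Ku+1)/(K(v+u)+1)$ holds because the denominator $K(v+u)+1$ exceeds the numerator $Ku+1$ by $Kv>0$. Hence only $\lambda_2$ survives as a competitor, and it suffices to prove $\lambda_3\ge\sigma^2 v$.

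The main step is this last inequality. After dividing by $\sigma^2$, multiplying by $2Kw(1-w)>0$ and writing $a:=Kv(1-w)$ and $b:=Ku+1$, the claim $\lambda_3\ge\sigma^2 v$ becomes $\sqrt{s_w}\ge a(2w-1)-b$, where a short computation rewrites $s_w$ in the compact form $s_w=(a+b)^2-4wab$ (which incidentally shows $s_w\ge(a-b)^2\ge 0$, so the square root is real). For $w\le\tfrac12$ the right-hand side is $\le -b<0$ and the inequality is trivial; for $w>\tfrac12$ I would square both sides, legitimate once the right-hand side is nonnegative, and reduce everything to the key algebraic identity $s_w-(a(2w-1)-b)^2=4w(1-w)\,a^2\ge 0$. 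This establishes $\lambda_3\ge\lambda_2\ge\lambda_1$ and $\lambda_3\ge\lambda_4$, so $\lambda_3=\lambda_{max}$ and \eqref{ecri} follows.

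The only genuine obstacle is the sign bookkeeping and algebra in this last step: one must recognize the compact form $s_w=(a+b)^2-4wab$, treat separately the regime $w>\tfrac12$ in which the right-hand side of the squared inequality can be positive, and verify that the difference collapses exactly to $4w(1-w)\,a^2$. Everything else is a direct reading-off of Lemma~\ref{l2}.
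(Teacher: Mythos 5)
Your proposal is correct and takes essentially the same route as the paper: the paper's entire proof of Theorem~\ref{t9} is the remark that the criterion ``follows directly from Lemma~\ref{l2},'' i.e.\ that $\lambda_3$ is the largest eigenvalue, which is exactly your strategy. The only difference is that the paper treats the dominance of $\lambda_3$ as immediate, whereas you verify it explicitly --- your reduction to $\sqrt{s_w}\ge a(2w-1)-b$ with $a=Kv(1-w)$, $b=Ku+1$, and the identity $s_w-(a(2w-1)-b)^2=4w(1-w)a^2$ check out --- so your write-up supplies the detail the paper omits.
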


\begin{thm}
The \textit{E}-optimal weight $w^*_{E,\Psi}$ for the prediction of the individual treatment effects is given by
\begin{equation}\label{wei}
w^*_{E,\Psi} =\frac{K(2u+v)+2}{K(4u+v)+4}.
\end{equation}
\end{thm}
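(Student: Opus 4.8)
The plan is to minimise the \textit{E}-criterion \eqref{ecri} directly over $w\in(0,1)$. By Lemma~\ref{l2} this criterion is exactly the largest eigenvalue $\lambda_3(w)$ of the mean squared error matrix, and the useful structural fact is that $\lambda_3$ and $\lambda_4$ are the two roots of a quadratic $\lambda^2-S(w)\,\lambda+P(w)=0$ with sum $S(w)=\lambda_3+\lambda_4$ and product $P(w)=\lambda_3\lambda_4$. Reading off the eigenvalue formulas of Lemma~\ref{l2} and using the identity $s_w=\big(Kv(1-w)+Ku+1\big)^2-4Kv(Ku+1)\,w(1-w)$ to remove the square root from the product, I would obtain the closed forms
\begin{equation*}
S(w)=\frac{\sigma^2\big(Kv(1-w)+Ku+1\big)}{Kw(1-w)},\qquad P(w)=\frac{\sigma^4 v(Ku+1)}{Kw(1-w)}.
\end{equation*}

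To locate the minimiser I would differentiate the defining quadratic implicitly, giving $(2\lambda_3-S)\,\lambda_3'=S'\lambda_3-P'$. Since $2\lambda_3-S=\lambda_3-\lambda_4=\sqrt{S^2-4P}>0$, the stationarity condition $\lambda_3'(w)=0$ is equivalent to $\lambda_3=P'/S'$. Substituting this back into $\lambda_3^2-S\lambda_3+P=0$ and clearing $(S')^2$ removes the eigenvalue from the problem and leaves a condition in $w$ alone,
\begin{equation*}
(P')^2-S\,S'\,P'+P\,(S')^2=0.
\end{equation*}
Equivalently, one may write $\lambda_3=\tfrac12\big(S+\sqrt{S^2-4P}\big)$, set the derivative to zero and square; the same identity results.

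The remaining work is purely algebraic. Writing $a=Ku+1$ and $\beta=Kv$, one has $P=C\,g$ and $S=\tfrac{\sigma^2}{K}h$, with $g(w)=\tfrac{1}{w(1-w)}$, $h(w)=\tfrac{\beta(1-w)+a}{w(1-w)}$ and $C=\tfrac{\sigma^4\beta a}{K^2}$. Substituting these together with their derivatives, the common factor $\tfrac{\sigma^8\beta a}{K^4}$ cancels; after multiplying by $w^5(1-w)^5$ the leading $a^2$-contributions cancel as well, and, setting $p:=w$, $q:=1-w$ with $p+q=1$ (so that $p-1=-q$ and $(p-q)+1=2p$), the condition factorises as $\beta\,p\,q^2\big[\beta q-2a(p-q)\big]=0$. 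Since $\beta pq^2>0$ on $(0,1)$, the bracket must vanish, i.e. $\beta(1-w)=2a(2w-1)$, which solves to
\begin{equation*}
w^*=\frac{2a+\beta}{4a+\beta}=\frac{K(2u+v)+2}{K(4u+v)+4},
\end{equation*}
establishing \eqref{wei}. Finally I would confirm this is the intended minimiser: one checks $w^*\in(0,1)$ and, since the squaring step can introduce a spurious root, verifies that the signs of $S'$ and $2P'-SS'$ agree at $w^*$, so that the stationary point is a genuine minimum.

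The main obstacle I anticipate is the bulk simplification in the third step: the stationarity condition superficially looks like a quartic in $w$, and the nontrivial point is that all higher-order terms cancel so that it collapses to a single linear factor. These cancellations hinge entirely on the relation $p+q=1$, and keeping the bookkeeping in terms of $a$, $\beta$, $p$, $q$ rather than expanding everything in $w$ is what makes the collapse transparent.
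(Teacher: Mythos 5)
Your proposal is correct, and it is genuinely a proof, which is more than the paper itself offers: the paper states this theorem without proof, the implicit argument being a direct minimization of the criterion \eqref{ecri}, i.e.\ differentiating the expression containing the radical $\sqrt{s_w}$ and simplifying. I checked your key steps. The sum and product read off Lemma~\ref{l2} are right: $S(w)=\sigma^2\bigl(Kv(1-w)+Ku+1\bigr)/\bigl(Kw(1-w)\bigr)$ and, via your discriminant identity for $s_w$, $P(w)=\sigma^4v(Ku+1)/\bigl(Kw(1-w)\bigr)$. The elimination identity $(P')^2-SS'P'+P(S')^2=0$ is indeed a necessary condition for stationarity of $\lambda_3$ (it also covers the degenerate case $S'=0$, since stationarity then forces $P'=0$). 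With $a=Ku+1$, $\beta=Kv$, $p=w$, $q=1-w$, multiplying by $p^5q^5$ gives $\beta a(p-q)^2pq-(\beta q+a)D(p-q)+D^2=0$ with $D=a(p-q)-\beta q^2$; the last two terms collapse to $-\beta pqD$, and $a(p-q)^2-D=q\bigl(\beta q-2a(p-q)\bigr)$, so the factorization $\beta pq^2\bigl(\beta q-2a(p-q)\bigr)=0$ and the solution $w^*=(2a+\beta)/(4a+\beta)=\bigl(K(2u+v)+2\bigr)/\bigl(K(4u+v)+4\bigr)$ come out exactly as you claim. Two small tidy-ups. First, your step $2\lambda_3-S=\sqrt{S^2-4P}>0$ needs $s_w>0$ on $(0,1)$; this follows from the identity $s_w=\bigl(Kv(1-w)-(Ku+1)\bigr)^2+4Kv(Ku+1)(1-w)^2$. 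Second, the closing sign verification (guarding against spurious roots introduced by squaring) can be replaced by a cleaner argument: $\mathrm{E}_{\Psi}(w)\to\infty$ as $w\to 0^+$ and as $w\to 1^-$, so the smooth function $\lambda_3$ attains its global minimum at an interior stationary point; every stationary point satisfies your polynomial identity, which has the single root $w^*$ in $(0,1)$; hence $w^*$ is the minimizer, with no second-order or sign check needed. Compared with the head-on differentiation the paper presumably has in mind, your sum--product elimination buys a radical-free computation whose collapse to a single linear factor is transparent; the price is the implicit-differentiation scaffolding, which you handle correctly.
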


Note that for both \textit{D}- and \textit{E}-criteria for the prediction the optimal weights tend to $w^*_{D,\Psi}=w^*_{E,\Psi}=0.5$, which is optimal for the fixed effects model (see e.g. \cite{wie}, p.~44), for $u\rightarrow\infty$ and to $w^*_{D,\Psi}=w^*_{E,\Psi}=1$ (all observations are being taken in the treatment group) for $v\rightarrow\infty$. For large $u$, large $v$ and fixed ratio $b=v/u$ we observe 
\begin{equation*}
w^*_{D,\Psi} \rightarrow -\frac{1}{N\,\mathrm{ln}(1+b)}+\frac{1}{2}+\sqrt{\frac{1}{N^2\left(\mathrm{ln}(1+b)\right)^2}+\frac{1}{4}}
\end{equation*}
and
\begin{equation*}
w^*_{E,\Psi} \rightarrow \frac{2/b+1}{4/b+1}.
\end{equation*}

\textbf{Example 2.} We consider again the model with $N=100$ individuals and $K=10$ observations per individual. The variance ratio $b$ takes the values $2$, $0.6$ and $0.001$ for both \textit{D}-criterion and \textit{E}-criteria. The number of treatment groups is fixed by one ($J=2$). 
The next picture (Figure 5 and Figure 6) illustrates the behavior of the \textit{D}- (left panel) and \textit{E}- (right panel) optimal weights in dependence on the variance parameter $\rho$. 

\begin{figure}[ht]
    \begin{minipage}[]{8.2 cm}
       \centering
       \includegraphics[width=78mm]{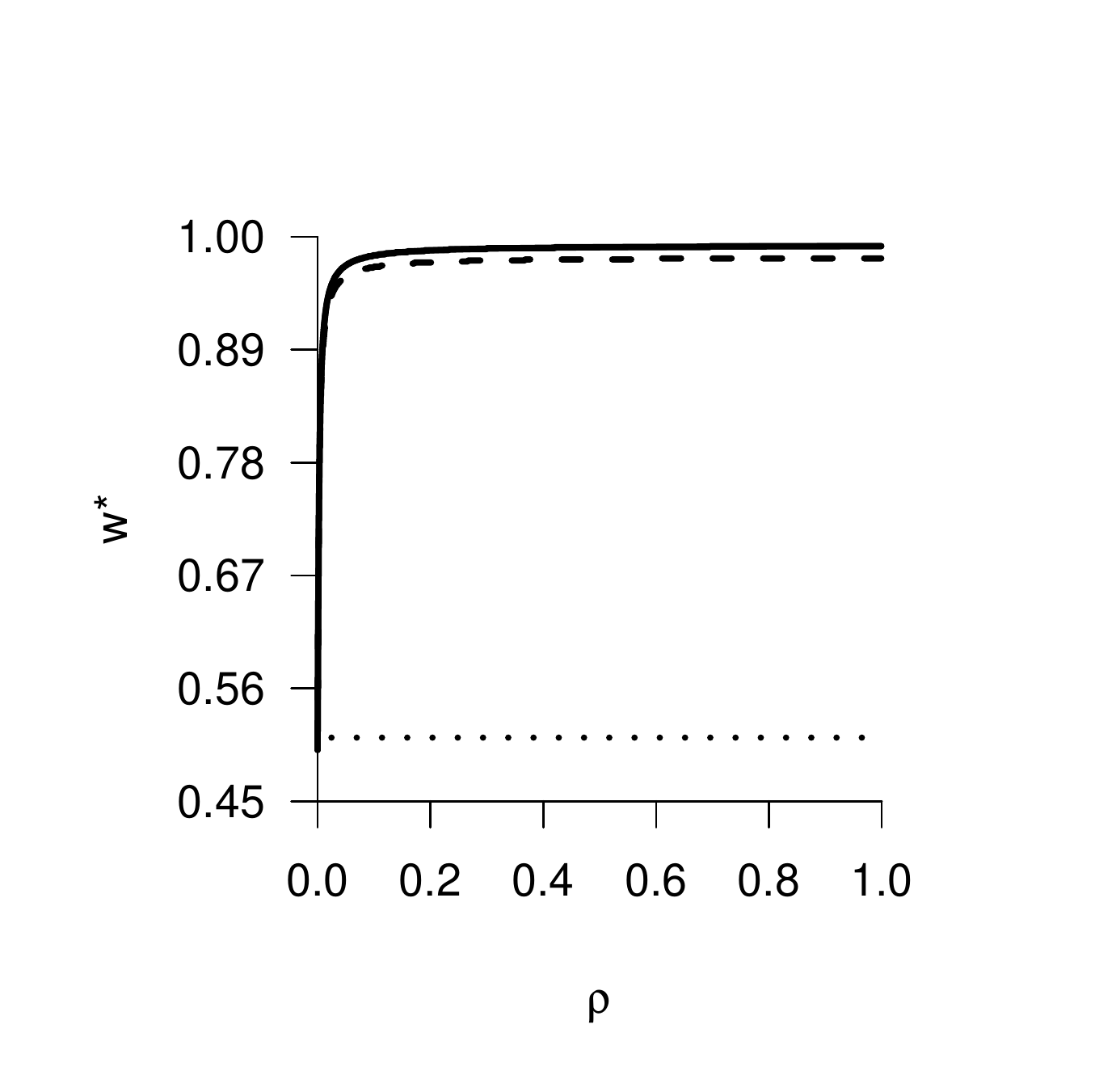}
       \label{d1}
       \end{minipage}
       \begin{minipage}[]{8.2 cm}
       \centering
       \includegraphics[width=78mm]{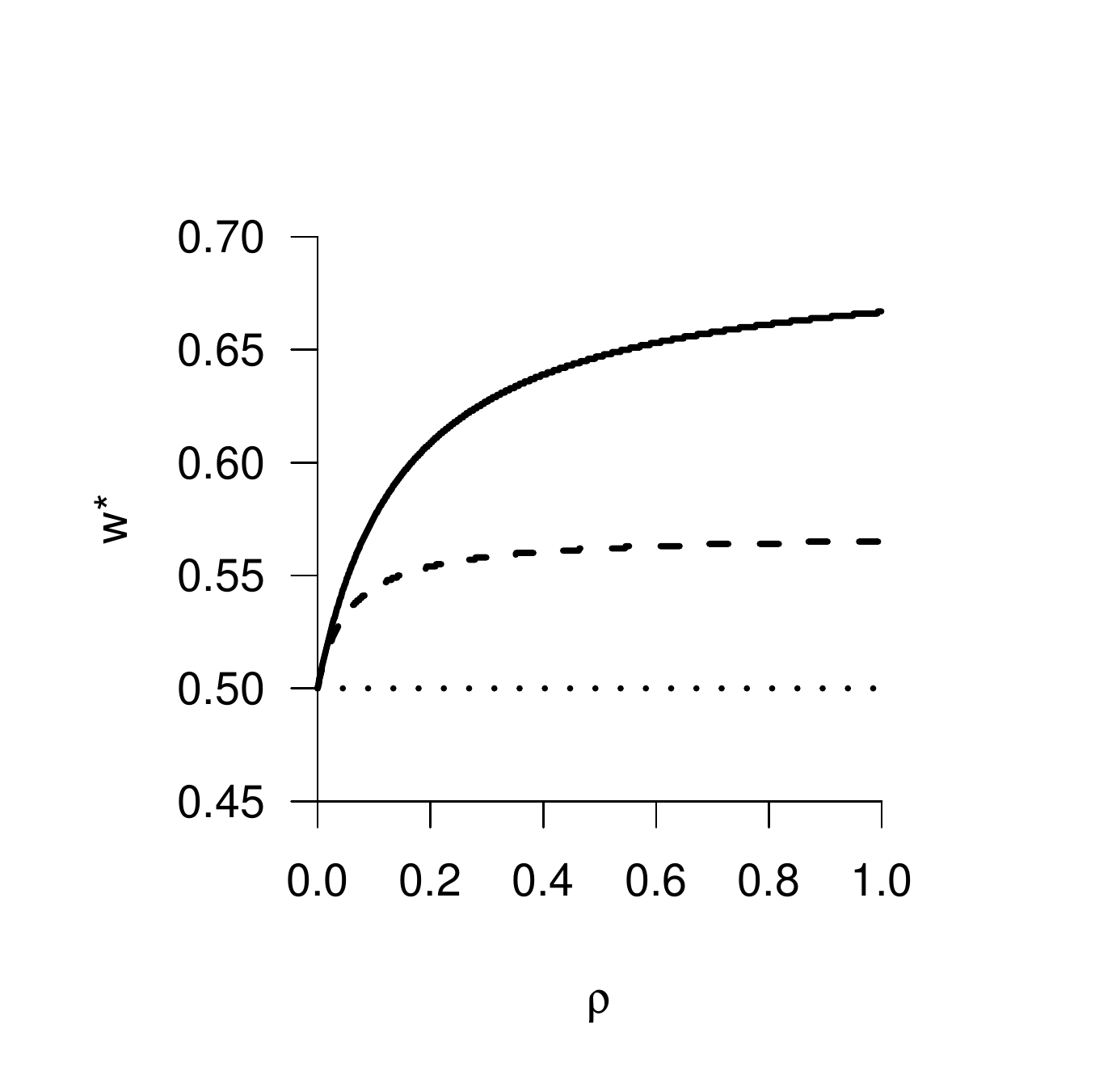}
       \label{e1}
       \end{minipage}
       \vspace{-1mm}
       \hspace*{5 mm}
       \begin{minipage}[]{6.5 cm}
       Figure 5: \textit{D}-optimal weight $w^*$ for variance ratios  $b=2$ (solid line), $b=0.6$ (dashed line) and $b=0.001$ (dotted line)
       \end{minipage}
       \hspace{15 mm}
       \begin{minipage}[]{6.5 cm}
       Figure 6: \textit{E}-optimal weight $w^*$ for variance ratios $b=2$ (solid line), $b=0.6$ (dashed line) and $b=0.001$ (dotted line)
       \end{minipage}
    \end{figure} 

As we can observe on the picture the optimal weights start with value $w^*=0.5$, which is optimal for the fixed effects model for both criteria: $w^*_{D,fix}=w^*_{E,fix}=0.5$, and increase with the variance parameter $\rho$ with limiting values $w^*=0.99$, $w^*=0.98$ and $w^*=0.51$ for the \textit{D}-criterion and $w^*=0.67$, $w^*=0.57$ and $w^*\approx0.5$ for the \textit{E}-criterion for $b=2$, $b=0.6$ and $b=0.001$, respectively. 

On the graphics the values of the optimal designs for the \textit{D}-criterion are larger than those for the \textit{E}-criterion for all $\rho$ and $b$. The \textit{A}-optimal designs illustrated by Figure~1 turn out to take all their values between the corresponding \textit{D}- and \textit{E}-optimal weights. 

Figure 7 and Figure 8 present the efficiency of the optimal designs from the fixed effects model for the present model for the \textit{D}- and \textit{E}- criteria.

\begin{figure}[ht]
    \begin{minipage}[]{8.2 cm}
       \centering
       \includegraphics[width=78mm]{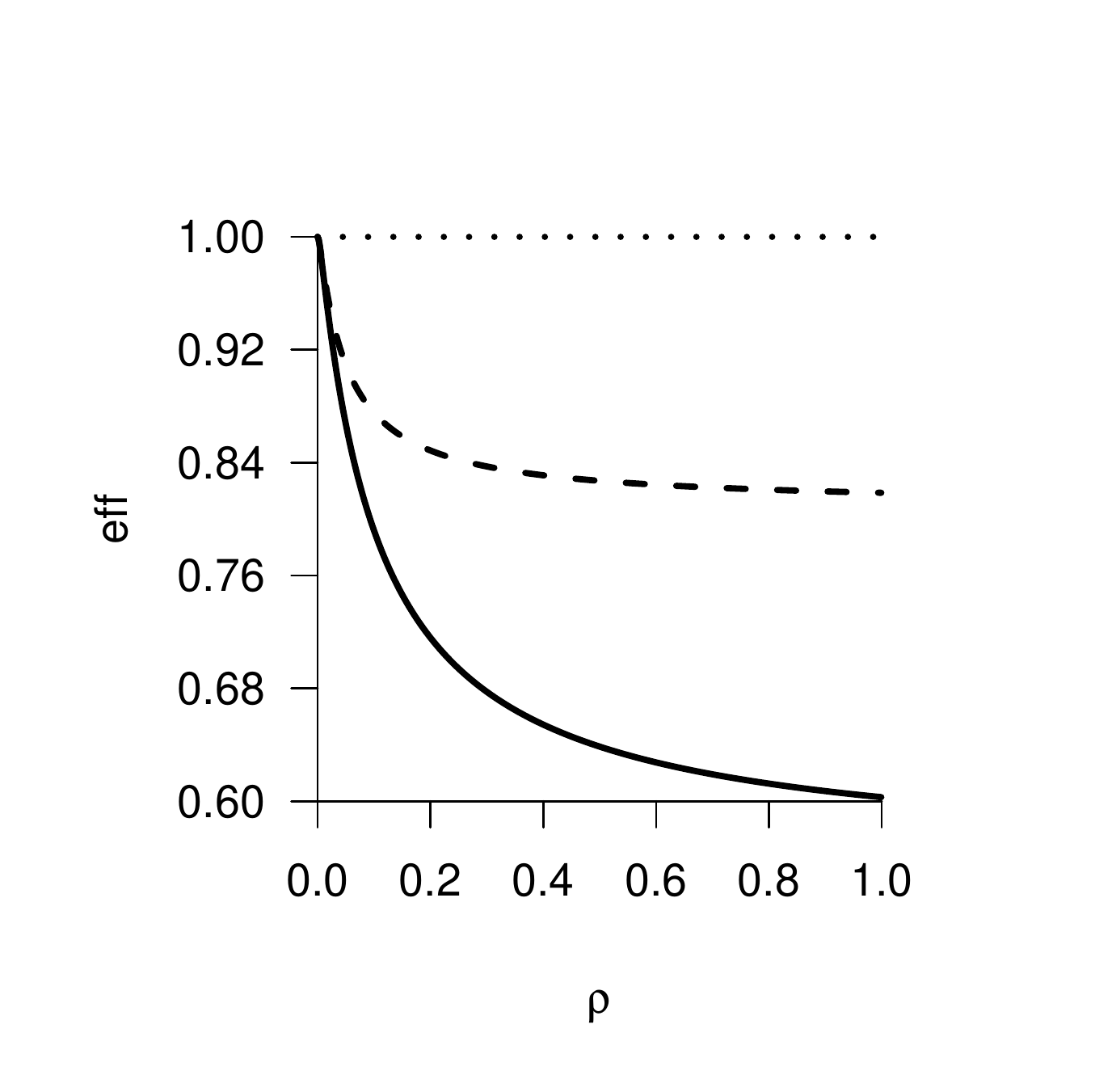}
       \label{d1eff}
       \end{minipage}
       \begin{minipage}[]{8.2 cm}
       \centering
       \includegraphics[width=78mm]{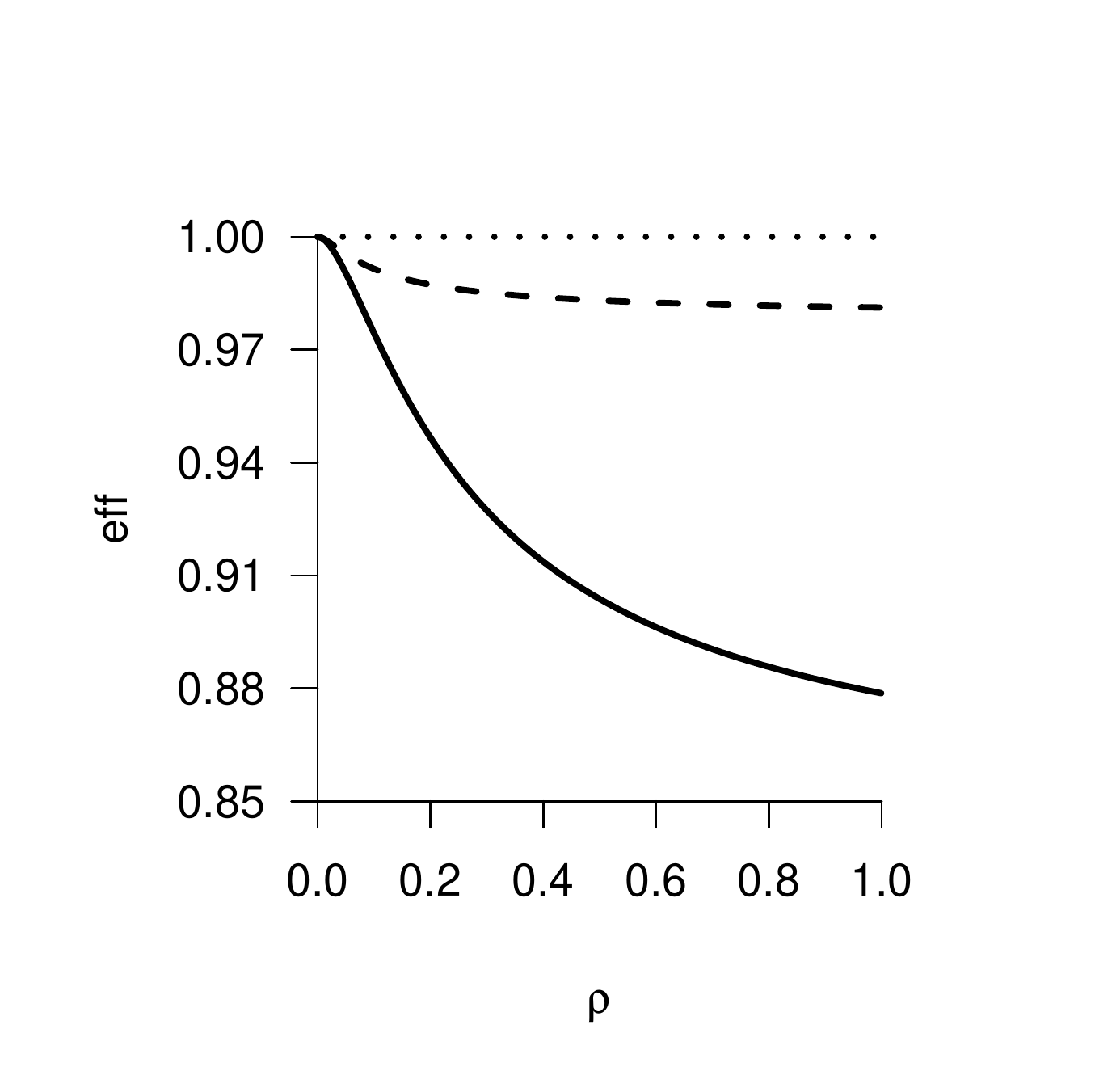}
       \label{e1eff}
       \end{minipage}
       \vspace{-1mm}
       \hspace*{5 mm}
       \begin{minipage}[]{6.5 cm}
       Figure 7: Efficiency of \textit{D}-optimal design in fixed effects model for variance ratios  $b=2$ (solid line), $b=0.6$ (dashed line) and $b=0.001$ (dotted line)
       \end{minipage}
       \hspace{15 mm}
       \begin{minipage}[]{6.5 cm}
       Figure 8: Efficiency of \textit{E}-optimal design in fixed effects model for variance ratios $b=2$ (solid line), $b=0.6$ (dashed line) and $b=0.001$ (dotted line)
       \end{minipage}
    \end{figure} 

The efficiencies on the picture start at point $\textit{eff}=1$ and decrease with increasing variance of the individual treatment effects with limiting values $\textit{eff}=0.60$, $\textit{eff}=0.82$ and $\textit{eff}\approx1$ for the \textit{D}- and $\textit{eff}=0.88$, $\textit{eff}=0.98$ and $\textit{eff}\approx1$ for the \textit{E}-criterion for $b=2$, $b=0.6$ and $b=0.001$, respectively.

According to these graphics and Figure~2 the efficiencies for the \textit{D}-criterion are smaller than those for the \textit{E}-criterion and the \textit{A}-criterion is in between for all values of $\rho$ and $b$.	

Note also that for small values of the variance ratio $b$ the optimal designs are very close to those in the fixed effects model for all design criteria. Hence, the corresponding efficiencies are close to $1$.

\section{Discussion and Conclusions}\label{k5}

In the present work multiple group RCR models with several treatment groups and a control group have been considered. We have obtained \textit{A}-, \textit{D}- and \textit{E}-optimality criteria for the estimation of population parameters and for the prediction of individual treatment effects using the covariance matrix of the BLUE and the mean squared error matrix of the BLUP, respectively. The optimal designs (optimal group sizes) turned out to be different for the estimation and the prediction and do not coincide with those in the corresponding fixed-effects-model (one-way-layout). For large values of the treatment effects variance the optimal designs assign almost all observations to be taken in the treatment groups. If the variance of the individual intercepts is large, the optimal groups sizes tend to those in the fixed effects model. 

The optimal group sizes are locally optimal, i.\,e. they depend on the variance parameters. To avoid this, minimax-optimal designs, which minimize the worst case for the criterion function over some reasonable region of values of the covariance matrix, or some other design criteria, which are robust with respect to the variance parameter, may be considered in the next step of this research. 

For the investigated models we assumed a diagonal covariance matrix of random effects. Models with more complicated covariance structure may also be considered in the future.


\section*{Acknowledgment} This research has been supported by grant SCHW 531/16-1 of the German Research Foundation (DFG).

\appendix
\section{Appendix}
\subsection{Proofs of Theorems~\ref{t1}-\ref{t4}}\label{A1}
To make use of the available results for estimation and prediction we recognize the model (\ref{12}) as a special case of the linear mixed model (see e.\,g. \cite{chr})
\begin{equation}\label{mod}
\mathbf{Y}=\mathbf{X} \mbox{\boldmath{$\beta $}} + \mathbf{Z} \mbox{\boldmath{$\gamma$}} + \mbox{\boldmath{$\varepsilon$}},
\end{equation}
where $\mathbf{X}$ and\ $\mathbf{Z}$ are known fixed and random effects design matrices, $\mbox{\boldmath{$\beta $}}$ and $\mbox{\boldmath{$\gamma $}}$ are vectors of fixed and random effects, respectively. The random effects $\mbox{\boldmath{$\gamma $}}$ and the observational errors $\mbox{\boldmath{$\varepsilon $}}$ are assumed to be uncorrelated and to have zero means and non-singular covariance matrices $\mathbf{G}=\mbox{Cov}\,(\mbox{\boldmath{$\gamma $}})$ and $\mathbf{R}=\mbox{Cov}\,(\mbox{\boldmath{$\varepsilon $}})$. 

According to \citet{hen1} for full column rank design matrix $\mathbf{X}$ the BLUE $\hat{\mbox{\boldmath{$\beta $}}}$ for $\mbox{\boldmath{$\beta $}}$ and the BLUP $\hat{\mbox{\boldmath{$\gamma $}}}$ for $\mbox{\boldmath{$\gamma $}}$ are provided by the mixed model equations
\begin{equation*}
\left( \begin{array}{c} \hat{\mbox{\boldmath{$\beta$}}} \\ \hat{\mbox{\boldmath{$\gamma$}}}
       \end{array} \right)
= {\left( \begin{array}{cc} \mathbf{X}^\top \mathbf{R}^{-1}\mathbf{X} & \mathbf{X}^\top \mathbf{R}^{-1}\mathbf{Z} \\ \mathbf{Z}^\top \mathbf{R}^{-1}\mathbf{X} & \mathbf{Z}^\top \mathbf{R}^{-1}\mathbf{Z}+\mathbf{G}^{-1}
          \end{array} \right)^{-1}}  
\left( \begin{array}{c} \mathbf{X}^\top \mathbf{R}^{-1}\mathbf{Y} \\ \mathbf{Z}^\top \mathbf{R}^{-1}\mathbf{Y} \end{array} \right), 
\end{equation*}
which can be rewritten in the alternative form
\begin{equation}\label{beta} 
\hat{\mbox{\boldmath{$\beta $}}}=\left(\mathbf{X}^\top (\mathbf{Z} \mathbf{G}\mathbf{Z}^\top+\mathbf{R})^{-1}\mathbf{X}\right)^{-1}\mathbf{X}^\top (\mathbf{Z} \mathbf{G}\mathbf{Z}^\top+\mathbf{R})^{-1}\mathbf{Y},
\end{equation}
\begin{equation}\label{gamma}
\hat{\mbox{\boldmath{$\gamma $}}}=\mathbf{G}\mathbf{Z}^\top (\mathbf{Z}\mathbf{G}\mathbf{Z}^\top + \mathbf{R})^{-1}(\mathbf{Y}-\mathbf{X}\hat{\mbox{\boldmath{$\beta $}}}).
\end{equation}

The mean squared error matrix of the estimator and predictor $\left(\hat{\mbox{\boldmath{$\beta $}}}^\top,\, \hat{\mbox{\boldmath{$\gamma $}}}^\top\right)^\top$ is given by (see \cite{hen3})
\begin{equation*}
\mbox{Cov}\,\left( \begin{array}{c} \hat{\mbox{\boldmath{$\beta $}}} \\ \hat{\mbox{\boldmath{$\gamma $}}}-\mbox{\boldmath{$\gamma $}} 
                   \end{array} \right)
={\left( \begin{array}{cc} \mathbf{X}^\top \mathbf{R}^{-1}\mathbf{X} & \mathbf{X}^\top \mathbf{R}^{-1}\mathbf{Z} \\ \mathbf{Z}^\top \mathbf{R}^{-1}\mathbf{X} & \mathbf{Z}^\top \mathbf{R}^{-1}\mathbf{Z}+\mathbf{G}^{-1}
          \end{array} \right)^{-1}}
\end{equation*} 
and can be represented as the partitioned matrix
\begin{equation}\label{c}
\mathrm{Cov}\,\left( \begin{array}{c} \hat{\mbox{\boldmath{$\beta $}}} \\ \hat{\mbox{\boldmath{$\gamma$}}}-\mbox{\boldmath{$\gamma $}} \end{array} \right)=\left(
\begin{array}{cc} \mathbf{C}_{11} & \mathbf{C}_{12} \\ \mathbf{C}_{12}^\top  & \mathbf{C}_{22} \end{array} \right),
\end{equation}
where $\mathbf{C}_{11}=\mathrm{Cov}(\hat{\mbox{\boldmath{$\beta $}}})$, $\mathbf{C}_{22}=\mathrm{Cov}\left(\hat{\mbox{\boldmath{$\gamma $}}}-\mbox{\boldmath{$\gamma $}}\right)$,
\begin{equation*}
\mathbf{C}_{11}=\left(\mathbf{X}^\top \left(\mathbf{Z} \mathbf{G}\mathbf{Z}^\top+\mathbf{R}\right)^{-1}\mathbf{X}\right)^{-1},
\end{equation*}
\begin{equation*}
\mathbf{C}_{22}=\left(\mathbf{Z}^\top \mathbf{R}^{-1}\mathbf{Z} +\mathbf{G}^{-1}-\mathbf{Z}^\top \mathbf{R}^{-1}\mathbf{X} (\mathbf{X}^\top \mathbf{R}^{-1}\mathbf{X})^{-1}\mathbf{X}^\top \mathbf{R}^{-1}\mathbf{Z}\right)^{-1},
\end{equation*}
\begin{equation*}
\mathbf{C}_{12}= -\mathbf{C}_{11}\, \mathbf{X}^\top \mathbf{R}^{-1}\mathbf{Z}\left(\mathbf{Z}^\top \mathbf{R}^{-1}\mathbf{Z} +\mathbf{G}^{-1}\right)^{-1}.
\end{equation*}

For $\mbox{\boldmath{$\beta $}}=\mbox{\boldmath{$\theta $}}_0$, $\mbox{\boldmath{$\gamma $}}=\mbox{\boldmath{$\zeta $}}$, $\mathbf{X}=\mathrm{Vec}_{j=1}^J\left(\mathds{1}_{r_j}\otimes\left(\mathds{1}_K\,\mathbf{f}(j)^\top\right)\right)$, $\mathbf{Z}=\mathrm{Diag}_{j=1}^J\left(\mathbb{I}_{r_j}\otimes\left(\mathds{1}_K\,\mathbf{f}(j)^\top\right)\right)$,\linebreak $\mathbf{G}=\sigma^2\,\mathbb{I}_N\otimes\textrm{block-diag}(u, \mathbb{I}_{J-1})$ and $\mathbf{R}=\mathrm{Cov}(\mbox{\boldmath{$\varepsilon$}})=\sigma^2\,\mathbb{I}_{NK}$ our model (\ref{12}) is of form (\ref{mod}). 

Using formulas \eqref{beta} and \eqref{gamma} and after employing some linear algebra we obtain the following 
BLUE and BLUP for the fixed and random effects $\mbox{\boldmath{$\theta$}}_0$ and $\mbox{\boldmath{$\zeta$}}$:
\begin{equation*}
\hat{\mbox{\boldmath{$\theta $}}}_0=\left(\begin{array}{c}\bar{\mathbf{Y}}_J \\ \mathrm{Vec}_{j=1}^{J-1}\left(\bar{\mathbf{Y}}_j-\bar{\mathbf{Y}}_J\right)\end{array}\right),
\end{equation*}
and
\begin{equation*}
\hat{\mbox{\boldmath{$\zeta $}}}=\left(\begin{array}{c}\frac{K}{K(v+u)+1}\,\mathrm{Vec}_{j=1}^{J-1}\mathrm{Vec}_{i=N_{J-1}+1}^{N_j}\left(\left(\begin{array}{c}u \\ v\,\mathbf{e}_j\end{array}\right)\left(\bar{\mathbf{Y}}_{j,i}-\bar{\mathbf{Y}}_j\right)\right)\\ \frac{K}{Ku+1}\,\mathrm{Vec}_{i=N_{J-1}+1}^{N_j}\left(\left(\begin{array}{c}u \\ \mathbf{0}_{J-1}\end{array}\right)\left(\bar{\mathbf{Y}}_{J,i}-\bar{\mathbf{Y}}_J\right)\right)\end{array}\right).
\end{equation*}
Now the results \eqref{mu}-\eqref{alpha} of Theorems~\ref{t1} and \ref{t2} are straightforward to verify.

To proof Theorems~\ref{t3} and \ref{t4} we firstly compute the blocks $\mathbf{C}_{11}$, $\mathbf{C}_{12}$ and $\mathbf{C}_{22}$ of the mean squared error matrix \eqref{c}:
\begin{equation}\label{C11}
\mathbf{C}_{11}=\frac{\sigma^2(Ku+1)}{Km}\left( \begin{array}{cc} 1 & -\mathds{1}_{J-1}^\top \\ -\mathds{1}_{J-1} & \frac{(K(u+v)+1)m}{(Ku+1)n}\mathbb{I}_{J-1}+\mathds{1}_{J-1}\mathds{1}_{J-1}^\top\end{array}\right)
\end{equation}
\begin{eqnarray}\label{C22}
\mathbf{C}_{22}=\sigma^2\left(\begin{array}{cc}\mathbf{C}_{221} & \mathbf{0} \\ \mathbf{0} & \mathbf{C}_{222} \end{array}\right),
\end{eqnarray}
where
\begin{eqnarray*}
\mathbf{C}_{221}&=&\mathbb{I}_{n(J-1)}\otimes\textrm{block-diag}(u, \mathbb{I}_{J-1})\nonumber \\
&&-\,\frac{K}{K(v+u)+1}\mathrm{Diag}_{j=1}^{J-1}\left(\left(\mathbb{I}_n-\frac{1}{n}\mathds{1}_n\mathds{1}_n^\top\right)\otimes\left(\left(\begin{array}{c}u \\ v\,\mathbf{e}_j\end{array}\right)\left(\begin{array}{c}u \\ v\,\mathbf{e}_j\end{array}\right)^\top\right) \right),
\end{eqnarray*}
\begin{eqnarray*}
\mathbf{C}_{222}=\mathbb{I}_{m}\otimes\textrm{block-diag}(u, \mathbb{I}_{J-1})-\frac{k}{Ku+1}\left(\mathbb{I}_m-\frac{1}{m}\mathds{1}_m\mathds{1}_m^\top\right)\otimes\left(\left(\begin{array}{c}u \\ \mathbf{0}_{J-1}\end{array}\right)\left(\begin{array}{c}u \\ \mathbf{0}_{J-1}\end{array}\right)^\top\right),
\end{eqnarray*}
and
\begin{eqnarray}\label{C12}
\mathbf{C}_{12}=-\sigma^2\left(\mathbf{C}_{121}\,\, \vdots \,\, \mathbf{C}_{122}\right),
\end{eqnarray}
where
\begin{eqnarray*}
\mathbf{C}_{121}= \mathrm{tVec}_{j=1}^{J-1}\left(\frac{1}{n}\mathds{1}_n^\top\otimes\left(\left(\begin{array}{c} 0 \\ \mathbf{e}_j\end{array}\right)\left(\begin{array}{c}u \\ v\mathbf{e}_j\end{array}\right)^\top\right)\right),
\end{eqnarray*}
\begin{eqnarray*}
\mathbf{C}_{122}=\frac{1}{m}\mathds{1}_m^\top\otimes\left(\left(\begin{array}{c} 1 \\ -\mathds{1}_{J-1}\end{array}\right)\left(\begin{array}{c}u \\ \mathbf{0}_{J-1}\end{array}\right)^\top\right).
\end{eqnarray*}

We can observe that $\mbox{\boldmath{$\Psi$}}_0=(\mathbf{0}_{J-1}\, \vdots \, \mathbb{I}_{J-1})\mbox{\boldmath{$\theta$}}_0$ and $\hat{\mbox{\boldmath{$\Psi$}}}_0=(\mathbf{0}_{J-1}\, \vdots \, \mathbb{I}_{J-1})\hat{\mbox{\boldmath{$\theta$}}}_0$ is the BLUE of $\mbox{\boldmath{$\Psi$}}_0$. Then the covariance matrix of $\hat{\mbox{\boldmath{$\Psi$}}}_0$ can be determined using the formula
\begin{equation*}
\mathrm{Cov}\left(\hat{\mbox{\boldmath{$\Psi$}}}_{0}\right)=(\mathbf{0}_{J-1}\, \vdots \, \mathbb{I}_{J-1})\mathbf{C}_{11}(\mathbf{0}_{J-1}\, \vdots \, \mathbb{I}_{J-1})^\top,
\end{equation*}
which implies result \eqref{cov}.

For the vector $\mbox{\boldmath{$\Psi$}}$ of all individual treatment effects it cam be verified that
%
\begin{equation*}
\mbox{\boldmath{$\Psi$}}=\left(\mathds{1}_N\otimes(\mathbf{0}_{J-1}\, \vdots \, \mathbb{I}_{J-1})\right)\mbox{\boldmath{$\theta$}}_0+\left(\mathbb{I}_N\otimes(\mathbf{0}_{J-1}\, \vdots \, \mathbb{I}_{J-1})\right)\mbox{\boldmath{$\zeta$}}
\end{equation*}
and the BLUP of $\mbox{\boldmath{$\Psi$}}$ is given by 
\begin{equation*}
\hat{\mbox{\boldmath{$\Psi$}}}=\left(\mathds{1}_N\otimes(\mathbf{0}_{J-1}\, \vdots \, \mathbb{I}_{J-1})\right)\hat{\mbox{\boldmath{$\theta$}}}_0+\left(\mathbb{I}_N\otimes(\mathbf{0}_{J-1}\, \vdots \, \mathbb{I}_{J-1})\right)\hat{\mbox{\boldmath{$\zeta$}}}.
\end{equation*}
Then the mean squared error matrix of $\hat{\mbox{\boldmath{$\Psi$}}}$ is of general form (\ref{mse}) with
\begin{equation*}
\mathbf{B}_{1}=\left(\mathds{1}_N\otimes(\mathbf{0}_{J-1}\, \vdots \, \mathbb{I}_{J-1})\right)\mathbf{C}_{11}\left(\mathds{1}_N\otimes(\mathbf{0}_{J-1}\, \vdots \, \mathbb{I}_{J-1})\right)^\top,
\end{equation*}
\begin{equation*}
\mathbf{B}_{2}=\left(\mathds{1}_N\otimes(\mathbf{0}_{J-1}\, \vdots \, \mathbb{I}_{J-1})\right)\mathbf{C}_{12}\left(\mathbb{I}_N\otimes(\mathbf{0}_{J-1}\, \vdots \, \mathbb{I}_{J-1})\right)^\top
\end{equation*}
and
\begin{equation*}
\mathbf{B}_{3}=\left(\mathbb{I}_N\otimes(\mathbf{0}_{J-1}\, \vdots \, \mathbb{I}_{J-1})\right)\mathbf{C}_{22}\left(\mathbb{I}_N\otimes(\mathbf{0}_{J-1}\, \vdots \, \mathbb{I}_{J-1})\right)^\top.
\end{equation*}
After applying \eqref{C11}-\eqref{C12} we obtain the result of Theorem~\ref{t4}.

\subsection{Proof of Lemma~\ref{l2}}\label{A2}
To determine the eigenvalues of $\textrm{Cov}\left(\hat{\mbox{\boldmath{$\Psi$}}}-\mbox{\boldmath{$\Psi$}}\right)$, we have to solve the equation
\begin{equation}\label{eig2}
\textrm{det}\left( \textrm{Cov}\left(\hat{\mbox{\boldmath{$\Psi$}}}-\mbox{\boldmath{$\Psi$}}\right)-\lambda\,\mathbb{I}_{N} \right)=0.
\end{equation}
From \eqref{mse2} it follows that
\begin{equation*}
 \textrm{Cov}\left(\hat{\mbox{\boldmath{$\Psi$}}}-\mbox{\boldmath{$\Psi$}}\right)-\lambda\,\mathbb{I}_{N} =:\left(\begin{array}{cc} \tilde{\mathbf{H}}_{11} & \mathbf{H}_{12} \\ 
\mathbf{H}_{12}^\top & \tilde{\mathbf{H}}_{22} \end{array}\right),
\end{equation*}
where
\begin{equation*}
\tilde{\mathbf{H}}_{11}=(a_1-\lambda)\,\frac{1}{n}\mathds{1}_n\mathds{1}_n^\top+(a_2-\lambda)\left(\mathbb{I}_{n}-\frac{1}{n}\mathds{1}_n\mathds{1}_n^\top\right)
\end{equation*}
for $a_1=\frac{\sigma^2\,N(Ku+1)}{K\,m}$ and $a_2=\frac{\sigma^2\,v\,(Ku+1)}{K(v+u)+1}$,
\begin{equation*}
\tilde{\mathbf{H}}_{22}=a_3\,\frac{1}{m}\mathds{1}_m\mathds{1}_m^\top+(\sigma^2v-\lambda)\mathbb{I}_{m}
\end{equation*}
for $a_3=\frac{\sigma^2(K(v+u)+1)m}{K\,n}+\frac{a_1m}{N}$,
and $\mathbf{H}_{12}$ is the same as in (\ref{mse2}).

Then we compute the determinant of $\textrm{Cov}\left(\hat{\mbox{\boldmath{$\Psi$}}}-\mbox{\boldmath{$\Psi$}}\right)-\lambda\,\mathbb{I}_{N}$ as
\begin{equation*}
\mathrm{det}\left( \textrm{Cov}\left(\hat{\mbox{\boldmath{$\Psi$}}}-\mbox{\boldmath{$\Psi$}}\right)-\lambda\,\mathbb{I}_{N} \right)=\mathrm{det}\left(\tilde{\mathbf{H}}_{11}\right)\mathrm{det}\left(\tilde{\mathbf{H}}_{22}-\mathbf{H}_{12}^\top\,\tilde{\mathbf{H}}_{11}^{-1}\mathbf{H}_{12}\right),
\end{equation*}
where
\begin{equation*}
\mathrm{det}\left(\tilde{\mathbf{H}}_{11}\right)=(a_1-\lambda)(a_2-\lambda)^{n-1},
\end{equation*}
\begin{equation*}
\tilde{\mathbf{H}}_{11}^{-1}=\frac{1}{a_1-\lambda}\,\frac{1}{n}\mathds{1}_n\mathds{1}_n^\top+\frac{1}{a_2-\lambda}\left(\mathbb{I}_{n}-\frac{1}{n}\mathds{1}_n\mathds{1}_n^\top\right),
\end{equation*}
\begin{equation*}
\tilde{\mathbf{H}}_{22}-\mathbf{H}_{12}^\top\,\tilde{\mathbf{H}}_{11}^{-1}\mathbf{H}_{12}=\left(a_3-\frac{a_1^2\,m}{(a_1-\lambda ) n}\right)\frac{1}{m}\mathds{1}_m\mathds{1}_m^\top+\left(\sigma^2v-\lambda\right)\mathbb{I}_{m},
\end{equation*}
\begin{equation*}
\mathrm{det}\left(\tilde{\mathbf{H}}_{22}-\mathbf{H}_{12}^\top\,\tilde{\mathbf{H}}_{11}^{-1}\mathbf{H}_{12}\right)=\left(\sigma^2v-\lambda\right)^{m-1}\left(a_3-\frac{a_1^2\,m}{(a_1-\lambda ) n}+\sigma^2v-\lambda\right).
\end{equation*}
Then we obtain
\begin{equation*}
\mathrm{det}\left( \textrm{Cov}\left(\hat{\mbox{\boldmath{$\Psi$}}}-\mbox{\boldmath{$\Psi$}}\right)-\lambda\,\mathbb{I}_{N} \right)=(a_2-\lambda)^{n-1}\left(\sigma^2v-\lambda\right)^{m-1}\left((a_3+\sigma^2v-\lambda)(a_1-\lambda)-\frac{a_1^2\,m}{n}\right),
\end{equation*}
which results in the following solutions of equation (\ref{eig2}):
\begin{equation*}
\lambda_1=\frac{\sigma^2\,v\,(Ku+1)}{K(v+u)+1},
\end{equation*}
\begin{equation*}
\lambda_2=\sigma^2v,
\end{equation*}
\begin{equation*}
\lambda_3=\frac{\sigma^2N}{2\,Kn\,m}(Km\,v+N(Ku+1)+\sqrt{s_{n,m}}),
\end{equation*}
where $s_{n,m}=K^2m^2v^2+2Km(m-n)(Ku+1)v+N^2(Ku+1)^2$,
and
\begin{equation*}
\lambda_4=\frac{\sigma^2N}{2\,Kn\,m}(Km\,v+N(Ku+1)-\sqrt{s_{n,m}}).
\end{equation*}
After applying $n=N\,w$ and $m=N(1-w)$ we can see that $s_{n,m}=N^2s_w$ and obtain the results of the lemma.

\bibliographystyle{natbib}
\bibliography{prus}

\begin{thebibliography}{}

\bibitem[Bailey(2008)]{bai}
Bailey, R.~A. (2008).
\newblock {\em Design of Comparative Experiments}.
\newblock Cambridge University Press.

\bibitem[Bland(2004)]{bla}
Bland, J.~M. (2004).
\newblock Cluster randomised trials in the medical literature: two bibliometric
  surveys.
\newblock {\em BMC Medical Research Methodology}, {\bf 4}, 21.

\bibitem[Bludowsky {\em et~al.}(2015)]{blu}
Bludowsky, A., Kunert, J., and Stufken, J. (2015).
\newblock Optimal designs for the carryover model with random interactions
  between subjects and treatments.
\newblock {\em Australian and New Zealand Journal of Statistics}, {\bf 57},
  517--533.

\bibitem[Christensen(2002)]{chr}
Christensen, R. (2002).
\newblock {\em {Plane Answers to Complex Questions: The Theory of Linear
  Models}}.
\newblock Springer, New York.

\bibitem[Entholzner {\em et~al.}(2005)]{ent}
Entholzner, M., Benda, N., Schmelter, T., and Schwabe, R. (2005).
\newblock {A note on designs for estimating population parameters}.
\newblock {\em Biometrical Letters - Listy Biometryczne}, {\bf 42}, 25--41.

\bibitem[Fedorov and Jones(2005)]{fedo}
Fedorov, V. and Jones, B. (2005).
\newblock The design of multicentre trials.
\newblock {\em Statistical Methods in Medical Research}, {\bf 14}, 205--248.

\bibitem[Gladitz and Pilz(1982)]{gla}
Gladitz, J. and Pilz, J. (1982).
\newblock {Construction of optimal designs in random coefficient regression
  models}.
\newblock {\em Mathematische Operationsforschung und Statistik, Series
  Statistics}, {\bf 13}, 371--385.

\bibitem[Harman and Prus(2018)]{harm}
Harman, R. and Prus, M. (2018).
\newblock {Computing optimal experimental designs with respect to a compound
  Bayes Risk criterion}.
\newblock {\em Statistics and Probability Letters}, {\bf 137}, 135--141.

\bibitem[Henderson(1975)]{hen3}
Henderson, C.~R. (1975).
\newblock {Best linear unbiased estimation and prediction under a selection
  model}.
\newblock {\em Biometrics}, {\bf 31}, 423--477.

\bibitem[Henderson(1984)]{hen4}
Henderson, C.~R. (1984).
\newblock {\em Applications of Linear Models in Animal Breeding}.
\newblock University of Guelph, Guelph, Canada.

\bibitem[Henderson {\em et~al.}(1959)]{hen1}
Henderson, C.~R., Kempthorne, O., Searle, S.~R., and von Krosigk, C.~M. (1959).
\newblock {The estimation of environmental and genetic trends from records
  subject to culling}.
\newblock {\em Biometrics}, {\bf 15}, 192--218.

\bibitem[Kunert {\em et~al.}(2010)]{kun}
Kunert, J., Martin, R.~J., and Eccleston, J. (2010).
\newblock Optimal block designs comparing treatments with a control when the
  errors are correlated.
\newblock {\em Journal of Statistical Planning and Inference}, {\bf 140},
  2719--2738.

\bibitem[Lemme {\em et~al.}(2015)]{lem}
Lemme, F., van Breukelen, G. J.~P., and Berger, M. P.~F. (2015).
\newblock Efficient treatment allocation in two-way nested designs.
\newblock {\em Statistical Methods in Medical Research}, {\bf 24}, 494--512.

\bibitem[Majumdar and Notz(1983)]{maju}
Majumdar, D. and Notz, W. (1983).
\newblock Optimal incomplete block designs for comparing treatments with a
  control.
\newblock {\em Annals of Statistics}, {\bf 11}, 258--266.

\bibitem[Patton {\em et~al.}(2006)]{pat}
Patton, G.~C., Bond, L., Carlin, J.~B., Thomas, L., Butler, H., Glover, S.,
  Catalano, R., and Bowes, G. (2006).
\newblock Promoting social inclusion in schools: A group-randomized trial of
  effects on student health risk behavior and well-being.
\newblock {\em American Journal of Public Health}, {\bf 96}, 1582--1587.

\bibitem[Prus(2015)]{pru3}
Prus, M. (2015).
\newblock {\em Optimal Designs for the Prediction in Hierarchical Random
  Coefficient Regression Models}.
\newblock Ph.D. thesis, Otto-von-Guericke University, Magdeburg.

\bibitem[Prus and Schwabe(2016)]{pru1}
Prus, M. and Schwabe, R. (2016).
\newblock Optimal designs for the prediction of individual parameters in
  hierarchical models.
\newblock {\em Journal of the Royal Statistical Society: Series B}, {\bf 78},
  175--191.

\bibitem[Rasch and Herrend\"orfer(1986)]{rasch}
Rasch, D. and Herrend\"orfer, G. (1986).
\newblock {\em Experimental Design: Sample Size Determination and Block
  Designs}.
\newblock Reidel, Dordrecht.

\bibitem[Schmelter(2007)]{sch}
Schmelter, T. (2007).
\newblock {\em Experimental Design For Mixed Models With Application to
  Population Pharmacokinetic Studies}.
\newblock Ph.D. thesis, Otto-von-Guericke University Magdeburg.

\bibitem[Schwabe(1996)]{schw}
Schwabe, R. (1996).
\newblock {\em Optimum Designs for Multi-Factor Models}.
\newblock Springer, New York.

\bibitem[Wierich(1986)]{wie}
Wierich, W. (1986).
\newblock {\em The D- and A-optimality of product design measures for linear
  models with discrete and continuous factors of influence}.
\newblock Habilitationsschrift. Freie Universi\"at Berlin.

\end{thebibliography}



\end{document}